\documentclass[12pt]{amsart}
\usepackage{amsmath,amsthm,amsfonts,amssymb,times,latexsym,mathabx,url}

\newtheorem{theorem}{Theorem}[section]
\newtheorem{prop}[theorem]{Proposition}
\newtheorem{lem}[theorem]{Lemma}

\newtheorem{cor}[theorem]{Corollary}

\numberwithin{equation}{section}

\voffset=-10mm
\oddsidemargin=0mm
\evensidemargin=0mm
\textwidth=160mm
\textheight=220mm

\renewcommand{\a}{\alpha}
\newcommand{\ba}{\bar{\a}}

\renewcommand{\d}{\delta}
\newcommand{\e}{\varepsilon}
\renewcommand{\l}{\lambda}
\renewcommand{\o}{\omega}

\newcommand{\E}{\mathcal{E}}

\newcommand{\M}{\mathcal{M}}
\newcommand{\N}{\mathbb{N}}
\renewcommand{\P}{\mathcal{P}}

\renewcommand{\S}{\mathcal{S}}

\renewcommand{\leq}{\leqslant}
\renewcommand{\geq}{\geqslant}
\newcommand{\lcm}{\operatorname{lcm}}

\newcommand{\m}{\mathbf m}
\newcommand{\p}{\mathbf p}
\renewcommand{\l}{\mathbf l}

\makeatletter
\renewcommand{\pmod}[1]{\allowbreak\mkern7mu({\operator@font mod}\,\,#1)}
\makeatother

\begin{document}

\title{Moments of the shifted prime divisor function} 
\author{Mikhail R. Gabdullin}

\address{Department of mathematics, 1409 West Green Street, University of Illinois at Urbana-Champaign, Urbana, IL 61801, USA} 
\email{mikhailg@illinois.edu, gabdullin.mikhail@yandex.ru}

\begin{abstract}
Let $\o^*(n) = \{d|n: d=p-1, \mbox{$p$ is a prime}\}$. We show that, for each integer $k\geq2$,
$$
\sum_{n\leq x}\o^*(n)^k \asymp x(\log x)^{2^k-k-1}, 
$$ 
where the implied constant may depend on $k$ only. This confirms a recent conjecture of Fan and Pomerance. Our proof uses a combinatorial identity for the least common multiple, viewed as a multiplicative analogue of the inclusion-exclusion principle, along with analytic tools from number theory.
\end{abstract}

\thanks{2010 Mathematics Subject Classification: Primary 11A41, 11N36, 11N37}

\thanks{Keywords and phrases: shifted primes, divisor function, the least common multiple}

\date{\today}
\maketitle	
	
\section{Introduction}	 

Let $\tau(n)=\sum_{d|n}1$ be the divisor function. The behavior of moments of $\tau$ is well understood: it is well known that, for any fixed positive integer $k$,  	
\begin{equation}\label{1.1} 
\sum_{n\leq x}\tau(n)^k \sim c_kx(\log x)^{2^k-1}
\end{equation}
for some constant $c_k>0$ (see, for instance, \cite{W} and \cite{LT}). Several recent papers have studied the following analogue of $\tau$. For a positive integer $n$, let 
$$	
\o^*(n) = \{d|n: d=p-1, \mbox{$p$ is a prime} \},
$$
which can be called the `shifted prime divisor' function. This function was first studied by Prachar and Erd\H{o}s \cite{Erd}, \cite{Pr}. It is easy to see that 
\begin{equation}\label{1.2}
\sum_{n\leq x}\o^*(n)=x\log\log x+O(x), 
\end{equation}
similar to the average value of $\o(n)$, the number of prime divisors of $n$. However, for $k\geq2$ the behavior is very different: we have 
$$
\sum_{n\leq x}\o^*(n)^2 \asymp x\log x
$$ 
(see the papers \cite{Murty}, \cite{Ding}, and also \cite{DGZ}), and it was shown recently by  Fan and Pomerance \cite{FP} that
$$
\sum_{n\leq x}\o^*(n)^3 \asymp x(\log x)^4,
$$ 
who also conjectured that, for any fixed integer $k\geq2$,
\begin{equation}\label{1.3} 
\sum_{n\leq x}\o^*(n)^k \asymp x(\log x)^{2^k-k-1}.	
\end{equation}
Heuristically, the right side here is just the right side of (\ref{1.1}) divided by $(\log x)^k$, which is reasonable to expect since the probability that a given divisor is prime is about $1/\log x$. Currently, (\ref{1.3}) is known for $k=2,3$ only, and there are not even any known nontrivial upper bounds for any $k>3$ (a trivial bound follows from $\o^*(n)\leq \tau(n)$).

The main goal of the present paper is to establish the above conjecture (\ref{1.3}).

\begin{theorem}\label{th1} 
We have
$$
\sum_{n\leq x}\o^*(n)^k \asymp x(\log x)^{2^k-k-1}.
$$	
\end{theorem}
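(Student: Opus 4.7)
The natural starting point is to expand the $k$-th power: since $\omega^*(n)$ counts primes $p$ with $(p-1)\mid n$, we have $\omega^*(n)^k = \#\{(p_1,\ldots,p_k) : p_i \text{ prime and } (p_i-1)\mid n \text{ for all } i\}$. Swapping the order of summation gives
\[
\sum_{n\leq x}\omega^*(n)^k \;=\; \sum_{p_1,\ldots,p_k}\left\lfloor \frac{x}{L(\mathbf{p})}\right\rfloor, \qquad L(\mathbf{p}):=\lcm(p_1-1,\ldots,p_k-1),
\]
so the quantity to analyse is $x\sum_{\mathbf{p}}1/L(\mathbf{p})$, with the floor-term error bounded separately via a count of tuples $\mathbf{p}$ with $L(\mathbf{p})\leq x$.

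The central ingredient is the identity
\[
\log L(\mathbf{p}) \;=\; \sum_{\emptyset\neq S\subseteq[k]}(-1)^{|S|+1}\log \gcd(p_i-1:i\in S),
\]
obtained by expanding $\max_i v_q(p_i-1)$ as an alternating sum of $\min$'s at each prime $q$; this is the multiplicative analogue of inclusion-exclusion alluded to in the abstract. Exponentiating expresses $1/L(\mathbf{p})$ as a product of gcds with alternating signs. I would then expand each $\gcd_S$ via $\gcd(a_i:i\in S) = \sum_{d_S\mid \gcd_S} \phi(d_S)$ and interchange summations, reducing the main term to a sum over divisor-vectors $(d_S)_S$ of a product of inner sums $\sum_{p\equiv 1 \pmod{m_i}} 1/(p-1)$, where $m_i = \lcm\{d_S : i\in S\}$. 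Bombieri--Vinogradov (together with Siegel--Walfisz for small moduli) should make each inner sum of order $(\log\log x)/\phi(m_i)$, after which the remaining sum over $(d_S)$ with $|S|\geq 2$ ought to produce the factor $(\log x)^{2^k-k-1}$; the exponent equals precisely the number of subsets $S\subseteq[k]$ with $|S|\geq 2$, so the combinatorial content of the proof is that each such $S$ contributes one independent divisor variable summed against a weight of $1/\phi$-type, giving one factor of $\log x$ per such $S$.

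The lower bound I would establish more directly, by specialising to a family of tuples with a prescribed common divisibility pattern (e.g.\ all $p_i\equiv 1 \pmod D$ for a suitable smooth $D$), and bounding their contributions from below. The main obstacle I anticipate is the bookkeeping forced by the alternating signs: the identity places $\gcd_S$ for $|S|$ odd and $\geq 3$ into the denominator of $1/L(\mathbf{p})$, which resists direct term-by-term expansion and requires one either to argue that such contributions are of lower order, or to reorganise the decomposition (for instance by a different grouping of subsets, or by an inductive scheme over $k$). A secondary difficulty is securing the uniformity of the prime-in-AP estimates across the range of moduli $m_i$ emerging from the expansion, which can be as large as $L\leq x$; this will force a careful split into ranges where one uses Siegel--Walfisz directly, where one averages via Bombieri--Vinogradov, and where the trivial bound suffices because the tuple count is already small.
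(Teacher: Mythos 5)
Your starting point coincides exactly with the paper's: the reduction to $P_k(x)=\#\{(p_1,\dots,p_k):\lcm(p_i-1)\le x\}$, and the multiplicative inclusion--exclusion identity $L(\mathbf p)=\prod_{|S|\,\mathrm{odd}}\gcd_S/\prod_{|S|\,\mathrm{even}}\gcd_S$ are both there (this is Lemma~2.1 of the paper). But after that the proposal and the paper diverge, and the proposal has a gap you yourself flag but do not close.

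The gap is the alternating-sign problem. Once you write $1/L(\mathbf p)=\prod_{|S|\,\mathrm{even}}\gcd_S/\big(\prod_i(p_i-1)\cdot\prod_{|S|\ge3\,\mathrm{odd}}\gcd_S\big)$, the expansion $\gcd_S=\sum_{d\mid\gcd_S}\varphi(d)$ only linearises the factors in the \emph{numerator}; the gcds over odd $S$ of size $\ge3$ sit in the denominator and cannot be traded for a divisor sum. Saying ``argue these are lower order'' is not obviously right and is never carried out; and indeed the paper does \emph{not} argue this way for either direction. For the upper bound the paper abandons gcds entirely in favour of a sign-free multiplicative decomposition: recursively defined quantities $u_S$ with $t_S=\prod_{T\supseteq S}u_T$, so that $L(\mathbf p)=\prod_S u_S$ with no denominator. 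One then splits by which $u_T$ is largest, uses Brun's sieve to count admissible $u_T$, and invokes Pollack's theorem (an upper-bound sieve for multiplicative functions on sifted sets) to control $\sum_p \tau_{2^{k-i}}((p-1)/V)/\,(p-1)$. Your proposal has no analogue of this step and would need one, because the innocuous-looking odd-$S$ gcds are exactly what obstructs a direct Dirichlet-series-style expansion of $1/L$.

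A second, smaller disagreement is the choice of analytic input. You propose Bombieri--Vinogradov plus Siegel--Walfisz; the paper instead uses Gallagher's prime number theorem in APs (preceded by Landau--Page to remove a possible exceptional modulus $Q(x)\gg\log_2 x$) for the lower bound, and Brun's sieve together with the Brun--Titchmarsh inequality for the upper bound. This is not a cosmetic choice: the constraints on $p_1,\dots,p_k$ are \emph{correlated} (each $p_j$ must lie in a prescribed class mod $M_j$ \emph{and} be coprime to auxiliary quantities built from the earlier $p_i$), so you need a pointwise lower bound valid for each individual modulus in a controlled range, not an estimate that is only good on average over moduli, and you need to iterate it $k$ times. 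Gallagher's theorem is tailored for this; Bombieri--Vinogradov is not, and the ``careful split into ranges'' you anticipate would, after the split, essentially reconstruct Gallagher-type input anyway. Finally, the heuristic that ``each $S$ with $|S|\ge2$ contributes one log'' does give the right exponent numerically, but the paper's mechanism is different: in the lower bound the exponent emerges from Lemma~3.4, which shows that exactly $2^k-k-1$ of the admissible pairwise-gcd configurations $\mathbf m$ satisfy $D(\mathbf m)=p$, so that the relevant multiplicative function $g_k$ has $g_k(p)=(2^k-k-1)(1+O(1/p))$ and a Wirsing-type theorem produces $(\log x)^{2^k-k-1}$; in the upper bound it emerges from summing $\tau_{2^{k-i}}$ over shifted primes. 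Neither route is the naive ``one log per divisor variable'' accounting.
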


As was mentioned in \cite{FP}, showing (\ref{1.2}) can be reduced to computing the quantities
$$
P_k(x)=\#\big\{(p_1,\ldots,p_k): \lcm\{p_1-1,\ldots,p_k-1\}\leq x\big\},
$$
where all $p_j$ are assumed to be primes. More specifically, 
\begin{equation}\label{1.4}
P_k(x) \asymp x(\log x)^{2^k-k-2}	
\end{equation}
would imply (\ref{1.1}) (see Section 2 for details), and we will be working with $P_k(x)$. To show that the lower bound in (\ref{1.4}) holds, we use some combinatorial identity for the least common multiple (see Lemma \ref{lem2.1}), which can be viewed as a multiplicative version of the inclusion-exclusion principle. To prove the upper bound, we generalize the approach given in \cite{FP}, which exploits another combinatorial decomposition of the least common multiple together with the upper sieve.

Section \ref{sec2} contains an outline of the proof as well as some auxiliary statements we rely on. The lower bound in Theorem \ref{th1} is established in Section \ref{sec3}, and the upper bound is proved in Section \ref{sec4}. Finally, 
in Section \ref{sec5} we provide some estimates for the distribution 
$$
N(x,y)=\#\{n\leq x: \o^*(n)\geq y\}
$$
of $\o^*$, which was addressed in \cite{FP} (we briefly mention that the related function $\#\{n\leq x: \exists \mbox{\small{ a prime $p>y$ with }} p-1|n \}$ was studied recently in \cite{F} and \cite{MPP}).
 
\medskip 

\textbf{Acknowledgements.} The author is grateful to Alisa Sedunova for introducing him to this topic and, along with Kevin Ford, for useful discussions.

\section{Outline and notation} \label{sec2} 

Let a positive integer $k\geq2$ be fixed. We use Vinogradov’s $\ll$ notation: both $F \ll G$ and $F\ll O(G)$ mean that there exists a constant $C>0$ such that $|F| \leq CG$. We write $F \asymp G$ if $G \ll F \ll G$; all the implied constants depend at most on $k$.  We denote the least common multiple of positive integers $d_1,\ldots,d_k$ by $[d_1,...,d_k]$. The letters $p$ and $r$ always denote prime numbers. As is standard, $\varphi$ is Euler's totient function and $\log_2x$ and $\log_3x$ stand for $\log\log x$ and $\log\log\log x$, respectively.

As mentioned in the introduction, we will work with
$$
P_k(x)=\#\{(p_1,\ldots,p_k): [p_1-1,\ldots,p_k-1]\leq x\}.
$$
Since
\begin{align*}
&\sum_{n\leq x}\o^*(n)^k=\sum_{\substack{p_1,...,p_k: \\ [p_1-1,\ldots,p_k-1] \leq x}}\left\lfloor\frac{x}{[p_1-1,\ldots,p_k-1]}\right\rfloor \asymp x\sum_{\substack{p_1,\ldots,p_k: \\ [p_1-1,\ldots,p_k-1] \leq x}}\frac{1}{[p_1-1,\ldots,p_k-1]}=\\
&x\sum_{m\leq x}\frac{\#\{(p_1,\ldots,p_k): [p_1-1,\ldots,p_k-1]=m\}}{m}=P_k(x)+x\int_1^x\frac{P_k(y)}{y^2}dy,
\end{align*}
to prove Theorem \ref{th1}, it is enough to show that, for all large $x$,
\begin{equation}\label{2.1}
P_k(x) \asymp x(\log x)^{2^k-k-2}.	
\end{equation}

Let $[k]=\{1,\ldots,k\}$ and, for a nonempty subset $S\subseteq[k]$ and numbers $t_1,\ldots,t_k$, define
\begin{equation}\label{2.2}
t_S=\gcd\{t_i: i\in S\}.
\end{equation}
We will also write $t_{ij}$ for $t_{\{i,j\}}$ for $i\neq j$. The following statement is the heart of the proof of the lower bound in (\ref{2.1}).

\begin{lem}\label{lem2.1}
Let $k\geq2$. For any positive integers $t_1,\ldots,t_k$,
\begin{equation}\label{2.3}
[t_1,\ldots,t_k]=\frac{\prod_{\substack{1\leq i\leq k\\ i \mbox{\tiny \, \textup{odd}}}}{\prod_{B\subseteq [k], |B|=i}t_B}}{\prod_{\substack{2\leq j\leq k\\ j \mbox{\tiny \, \textup{even}}}}{\prod_{C\subseteq [k], |C|=j}t_C}}.
\end{equation}	
\end{lem}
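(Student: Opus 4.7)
The plan is to verify (\ref{2.3}) one prime at a time. Both sides are positive rationals, so it suffices to show that for every prime $p$ the $p$-adic valuations of the two sides agree. Fix such a prime and set $a_i:=v_p(t_i)$ for $i\in[k]$. Using $v_p(\gcd)=\min v_p$, $v_p(\lcm)=\max v_p$, and the fact that $v_p$ turns products into sums, applying $v_p$ to (\ref{2.3}) reduces the lemma to the purely combinatorial identity
$$
\max_{i\in[k]} a_i \;=\; \sum_{\emptyset\neq S\subseteq[k]}(-1)^{|S|+1}\min_{j\in S}a_j,
$$
valid for arbitrary reals $a_1,\ldots,a_k$. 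Thus the multiplicative statement (\ref{2.3}) is nothing but the exponentiated form of the classical max--min inclusion--exclusion, and proving the latter suffices.

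To establish this identity I would relabel so that $a_1\leq a_2\leq\cdots\leq a_k$; both sides are symmetric in the $a_i$, so no generality is lost. With this ordering $\min_{j\in S}a_j=a_{\min S}$, so I would group nonempty subsets $S$ by their minimum element. For each $i\in[k]$, the subsets with $\min S=i$ are precisely those of the form $S=\{i\}\cup T$ with $T\subseteq\{i+1,\ldots,k\}$, and for any such $S$ we have $(-1)^{|S|+1}=(-1)^{|T|}$. Hence the total contribution of the index $i$ to the right-hand side is
$$
a_i\sum_{T\subseteq\{i+1,\ldots,k\}}(-1)^{|T|}\;=\;a_i\,(1-1)^{k-i},
$$
which vanishes unless $i=k$, in which case it equals $a_k$. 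Summing over $i$ gives $a_k=\max_i a_i$, as required.

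I do not anticipate any real difficulty: the whole content of Lemma \ref{lem2.1} is the observation above, and the combinatorial step is a one-line application of the binomial identity $(1-1)^n=0$ for $n\geq 1$. The only conceptual point is the reduction to prime valuations, which also takes care of the fact that the right-hand side of (\ref{2.3}) is automatically an integer: once the $p$-adic valuations of both sides match for every $p$, the numerator is forced to be divisible by the denominator and the quotient equals $[t_1,\ldots,t_k]$.
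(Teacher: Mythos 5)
Your proof is correct and follows essentially the same route as the paper: reduce to $p$-adic valuations, sort the exponents, group nonempty subsets by their minimal element, and kill every contribution except the largest exponent via the alternating sum of binomial coefficients. The only difference is purely cosmetic (you parameterize the subsets as $\{i\}\cup T$ and invoke $(1-1)^{k-i}$ directly, whereas the paper counts them with $\binom{k-i}{j-1}$); both are the same binomial-theorem calculation.
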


\begin{proof}
This is a version of inclusion-exclusion principle. Fix a prime $p$ and, for each $i$, let $p^{\a_i}|| t_i$; without loss of generality we may assume that $\a_1\leq \a_2\leq\ldots\leq \a_k$ (since both sides of (\ref{2.3}) do not change under any permutation of $t_i$). Clearly, for $S\subseteq[k]$, we have $p^{\a_{\min S}}||t_S$.Then $p$ appears with the exponent $\a_k$ in the left side, and with the exponent 
$$
H=\sum_{\substack{S\subseteq[k]\\ |S|\geq1}}(-1)^{|S|-1}\a_{\min S}
$$ 
in the right side. If $\min S=i$, then $|S|\leq k-i+1$ and there are ${k-i \choose j-1}$ choices of such $S$ with $|S|=j$. Thus, 
$$
H=\a_k+\sum_{i=1}^{k-1}\a_i\sum_{j=1}^{k-i+1}(-1)^{j-1}{k-i \choose j-1}=\a_k+\sum_{i=1}^{k-1}\a_i\sum_{j=0}^{k-i}(-1)^j{k-i \choose j}=\a_k.
$$
The claim follows.
\end{proof}

Note that values of $t_{ij}$ give us values of $t_S$ for all $S\subseteq[k]$ with $|S|\geq2$ (because $t_S=\gcd\{t_{ij}: i,j\in S\}$). Our main idea is that once we know size of each $p_i$ and also know all pairwise greatest common divisors $\gcd(p_i-1,p_j-1), 1\leq i< j\leq k$, we control size of the least common multiple $[p_1-1,\ldots,p_k-1]$. 

Since the cases $k=2$ and $k=3$ of Theorem \ref{th1} were established in the previous works, we will focus on the case $k\geq4$. We describe the case $k=4$ as the model situation. Let $E$ be a large enough absolute constant. We start with the multidimensional dyadic decomposition, which gives 	
$$	
P_4(x)=\sum_{\substack{l_i\leq \log x+1 \\ 1\leq i\leq4}}\#\{(p_1,\ldots,p_4): E^{l_i-1}<p_i-1\leq E^{l_i}, [p_1-1,\ldots,p_4-1]\leq x\}.
$$
Assume that $E^{l_i-1}<p_i-1\leq E^{l_i}$ for all $i$. Let us ignore that $p_i-1$ are shifted primes for a moment. Denote $t_i=p_i-1$ for short, and recall the notation (\ref{2.2}). Lemma \ref{lem2.1} gives us
\begin{equation*}
[t_1,\ldots,t_4]=\frac{\prod_{i=1}^4t_i\cdot \prod_{|B|=3}t_{B}}{\prod_{|A|=2}t_A \cdot t_{[4]}},
\end{equation*}
where $A$ and $B$ go over subsets of $[4]=\{1,2,3,4\}$. Then $[t_1,...,t_4]\leq x$ is guaranteed by
\begin{equation}\label{2.4}
\frac{\prod_{|A|=2}t_A\cdot t_{[4]}}{\prod_{|B|=3}t_B} \geq E^{l_1+l_2+l_3+l_4}x^{-1}.
\end{equation}
Let numbers $t_i\in(E^{l_i-1}, E^{l_i}]$, $1\leq i\leq 4$, be such that $\gcd(t_i,t_j)=m_{ij}$ for all $1\leq i<j\leq 4$. For any sets $A_1,A_2,B_1,B_2$ with $|A_1|=|A_2|=|B_1|=|B_2|=2$, we must have 
\begin{equation}\label{2.5}
\gcd(m_{A_1},m_{B_1})=\gcd(m_{A_2},m_{B_2})
\end{equation}
whenever $A_1\cup B_1=A_2\cup B_2$, since $t_{A_1\cup B_1}=t_{A_2\cup B_2}$. Let a six-tuple 
$$
\m=\big(m_A: A\subseteq[4], |A|=2\big)
$$
satisfy (\ref{2.5}) and set
\begin{equation*}
M_i=M_i(\m):=\lcm\{m_A: A\ni i, |A|=2\}
\end{equation*}
for each $i$. Denote similarly 
$$
m_S:=\gcd\{m_{ij}: i,j\in S\};
$$ 
let also $\l=(l_1,...,l_4)$. Then
\begin{equation*}
P_4(x)\geq \sum_{\m}\sum_{\l}\P(\l,\m),	
\end{equation*}
where $\l$ is such that $l_i\leq \log x$ for each $i$,
$$
\P(\l,\m)=\#\{(p_1,\ldots,p_4): E^{l_{i-1}}<p_i\leq E^{l_i}, \gcd(p_i-1,p_j-1)=m_{ij}\} 
$$
and $\m$ satisfy (\ref{2.5}) and 
\begin{equation*}
\frac{\prod_{|A|=2}m_A\cdot m_{[4]}}{\prod_{|B|=3}m_B}\geq E^{l_1+l_2+l_3+l_4}x^{-1}	
\end{equation*}
(which is just (\ref{2.4})). Clearly, if $(p_1,\ldots,p_4)$ is counted in $\P(\l,\m)$, then each $p_i-1$ is divisible by $M_i$. Though the reverse is generally not true, one can impose additional restrictions on $\m$ and $\l$ and use Gallagher's prime number theorem to obtain a lower bound on $\P(\l,\m)$ for a large enough set of pairs $(\m,\l)$. After that, one only needs to deal with special multiplicative functions, for which standard analytic number theory tools are available. This strategy will allow us to establish the lower bound for $P_4$ of correct order, and the case of general $k$ is essentially identical.  

The same approach, together with the Brun-Titchmarsh inequality, yields an upper bound for $P_k(x)$. However, if we simply the number of primes $p_i\leq E^{l_{i}}$ that are equal to $1$ modulo $M_i$ by $E^{l_i}/\big(\varphi(M_i)\log(2E^{l_i}/M_i)\big)$, we ultimately obtain an upper bound of the form
$$
P_k(x) \ll x(\log x)^{2^k-k-1}(\log_2x)^k,
$$ 
because we lose something when $M_i$ is close to $E^{l_i}$. Instead of this, we generalize the proof of the upper bound $P_3(x)\ll (\log x)^3$ given in \cite{FP}. This requires another notation. Let $t_1,\ldots,t_k$ be any positive integers, as before. All subsets of $[k]$ considered below are assumed to be nonempty. Let $u_{[k]}=t_{[k]}$ and, once the numbers $u_S$ with $S\subseteq[k]$, $|S|=i\geq2$, are defined, set
\begin{equation}\label{2.6}
u_S=t_S\prod_{\substack{T\supset S\\|T|\geq i}}u_{T}^{-1}
\end{equation}
for each $S\subseteq[k]$ with $|S|=i-1$. Then 
\begin{align*} 
& [t_1,\ldots,t_k]=\prod_{S\subseteq[k]}u_S, \\
& t_S=\prod_{T\supseteq S}u_T,
\end{align*}
and 
\begin{equation}\label{2.7}
\gcd\left(\prod_{S\supseteq S_1}u_S,\prod_{S\supseteq S_2}u_S\right)=\prod_{S\supseteq S_1\cup S_2}u_S
\end{equation}
for all $S_1,S_2\subseteq[k]$. Let us turn again to the case $k=4$. Let a quadruple $\p=(p_1,\ldots,p_4)$ be counted in $P_4(x)$ and denote $t_i=p_i-1$ for short for each $i=1,\ldots,4$. Then  
\begin{equation}\label{2.8}
\prod_{S\subseteq [4]}u_S(\p) \leq x. 
\end{equation}
We have
$$
P_4(x) \leq \sum_{T\subseteq[4]}P_{4,T}(x),
$$
where $P_{4,T}(x)$ counts the number of quadruples with $\max_{S}u_S(\p)=u_T(\p)$.
There are four cases depending on what the cardinality of $T$ is, but they are all similar. For instance, let us consider the case $T=T_0:=\{3,4\}$. Since there are $15$ factors in the left side of (\ref{2.8}), we have 
\begin{equation}\label{2.9}
\prod_{S\neq T_0}u_S(\p) \leq x^{14/15}.
\end{equation}
For a positive integer $l\geq2$, let 
\begin{equation}\label{2.10}
\tau_l(n)=\#\{(d_1,\ldots,d_l)\in \N^l: d_1\cdot\ldots\cdot d_l=n\}.
\end{equation}
Now we choose a prime $p_1$ and fix a factorization $p_1-1=\prod_{S\ni 1}u_S$; here we have eight factors in the right side and thus $\tau_8(p_1-1)$ such factorizations. We fix any of them such that (\ref{2.7}) holds for the set $\{u_S: S \ni 1\}$. Set $V_1=\prod_{S\ni 1,2} u_S$ and choose $p_2$ which is $\equiv 1\pmod{V_1}$; next, we fix a factorization $(p_2-1)/V_1=\prod_{S\ni2, S\notni 1}u_S$ (here there are $\tau_4((p_2-1)/V_1)$ possibilities) such that the set $\{u_S: \min S\leq 2\}$ satisfies (\ref{2.7}). Note that
\begin{equation}\label{2.11}
(p_1-1)\frac{p_2-1}{V_1} = \prod_{S:\, \min S\leq 2}u_S.
\end{equation}
Denote $G=\prod_{\min S\leq 2}u_S$. For $S$ with $\min S>2$ and $S\neq T_0$, we choose any positive integers $u_S$ such that (\ref{2.7}) holds for $u_S$, $S\neq T_0$, and
$$
\prod_{\min S>2, \, S\neq T_0}u_S\leq x^{14/15}G^{-1}.
$$ 
For $j=3,4$, let
$$
a_j=\prod_{S\ni j, \, S\ne T_0}u_S. 
$$ 
Then $u_{T_0}\leq x/\prod_{S\neq T_0}u_S$ must be such that $a_ju_{T_0}+1$ is a prime for $j=3,4$. Also, $\log (x/\prod_{S\neq T_0}u_S) \gg \log x$ by (\ref{2.10}). By Brun's sieve and (\ref{2.11}), we have at most
$$
\ll \frac{x}{\prod_{S\neq T_0}u_S\cdot (\log x)^2}\left(\frac{\Delta}{\varphi(\Delta)}\right)^2= \prod_{i=1}^{2}\frac{1}{(p_i-1)/V_{i-1}}\cdot\frac{x}{\prod_{\substack{S\neq T_0\\\min S>2}}u_S\cdot (\log x)^2}\left(\frac{\Delta}{\varphi(\Delta)}\right)^2
$$
options for $u_{T_0}$, where we set $V_0=1$ and
$$
\Delta=a_3a_4(a_3-a_4).
$$
Note that $a_3\neq a_4$ by (\ref{2.8}) applied with $S_1=\{3\}$ and $S_2=\{4\}$, so that $\Delta\neq0$. Let us write $u_i$ instead of $u_{\{i\}}$ for brevity. Then
$$
P_{4,{T_0}}(x) \ll 
\sum_{p_1\leq x}\frac{\tau_8(p_1-1)}{p_1-1} \sum_{p_2}\frac{\tau_4((p_2-1)/V_1)}{(p_2-1)/V_1}\sum_{u_3,u_4}\frac{x}{u_3u_4\cdot (\log x)^2}\left(\frac{\Delta}{\varphi(\Delta)}\right)^2 
$$
where the second summation is over $p_2\leq x$ with $p_2\equiv 1\pmod{V_1}$. The factor $(\Delta/\varphi(\Delta))^2$ can be shown to be $O(1)$ on average uniformly for choices of $u_S$ with $\min S\leq 2$, and thus, since both $u_3$ and $u_4$ are at most $x$, the innermost sum is $O(x)$ for any choice of $u_S$ with $\min S\leq 2$. So,
\begin{equation}\label{2.12} 
P_{4,T_0}(x) \ll x\sum_{p_1}\frac{\tau_8(p_1-1)}{p_1-1} \sum_{p_2}\frac{\tau_4((p_2-1)/V_1)}{(p_2-1)/V_1}.
\end{equation}
To deal with these sums, we will need a result from \cite{Pol} about average value of multiplicative functions over the shifted primes (see Lemma \ref{lem4.1} below). Roughly, the idea is the following: the function $\tau_l(n)$ is multiplicative and $\tau_l(p)=l$ for every prime $p$ by (\ref{2.10}); by classical estimates for average value of multiplicative functions (for instance, Lemma \ref{lem3.5} below) we have
$$
\sum_{n\leq y}\frac{\tau_l(n)}{n} \asymp (\log y)^l.
$$
Shifted primes are known to have a similar anatomy to that of typical positive integers; then (since there are about $y/\log y$ summands below)
$$
\sum_{p\leq y}\frac{\tau_l(p-1)}{p-1} \asymp (\log y)^{l-1}.
$$
Putting this into (\ref{2.12}) (and ignoring the appearance of $V_1$ for now) would give us
$$
P_{4,T_0}(x) \ll x(\log x)^{10}.
$$
One can handle all $P_{4,T}(x)$ in this manner, and then the bound $P_4(x)\ll x(\log x)^{10}$ will follow. The case of general $k$ is more technical but essentially identical. We will turn to the details for both lower and upper bounds in the next sections. 


\section{The lower bound} \label{sec3}

We will make use of lower bounds for the number of primes in arithmetic progressions and thus need to address the possibility of an exceptional Siegel zero. The next statement is a corollary of Landau-Page Theorem.

\begin{theorem}\label{th3.1}
Let $x$ be large. Then there is $Q(x)$ which is either equal to
$1$ or is a prime of size $Q\gg \log_2 x$ such that $L(\sigma +it, \chi)\neq 0$ for any character $\chi$ of modulus at most $x$ and coprime to $Q(x)$ whenever 	
$$
1-\sigma \leq \frac{c_0}{\log(x(1+|t|))},
$$
where $c_0>0$ is an absolute constant. 
\end{theorem}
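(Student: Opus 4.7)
The plan is to apply the Landau-Page theorem to concentrate any exceptional zero near $\sigma=1$ on a single real primitive character, and then extract a sufficiently large prime divisor from its conductor. First, I would invoke the standard zero-free region together with Landau-Page: there is an absolute $c_0>0$ such that $L(\sigma+it,\chi)\neq 0$ in the region $1-\sigma\leq c_0/\log(x(1+|t|))$ for every $\chi$ of modulus $q\leq x$, with at most one exception, namely a real zero $\b^*$ of a unique real primitive character $\chi^*$ of some conductor $q^*\leq x$. If no such $\chi^*$ exists, I set $Q(x)=1$ and the claim is immediate.

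Assume that $\chi^*$ does exist. I would combine Page's effective lower bound on Siegel zeros, $1-\b^*\gg (q^*)^{-1/2}(\log q^*)^{-2}$, with the defining inequality $1-\b^*\leq c_0/\log x$ to conclude $q^*\gg(\log x)^{2-o(1)}$. Next, I would use the classification of real primitive Dirichlet characters as Kronecker symbols of fundamental discriminants, so $q^*$ is of the form $m$, $4m$, or $8m$ with $m$ odd and squarefree; in particular $m\geq q^*/8\gg(\log x)^{2-o(1)}$. Letting $P$ denote the largest prime factor of $m$, the fact that $m$ is squarefree gives $m\mid\prod_{p\leq P}p=\exp(\theta(P))=\exp((1+o(1))P)$ by the prime number theorem, whence $P\geq(1-o(1))\log m\gg\log_2 x$. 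Define $Q(x):=P$; this is a prime of size $\gg \log_2 x$, as required.

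To finish, I would note that any character $\chi$ of modulus $q\leq x$ with $\gcd(q,Q(x))=1$ satisfies $Q(x)\nmid q$, hence $q^*\nmid q$ (since $Q(x)\mid q^*$), so $\chi$ cannot be induced by $\chi^*$; the standard zero-free region therefore applies to $\chi$ without exception. No genuine obstacle should arise here: the statement is essentially bookkeeping on top of Landau-Page, Page's effective bound on real zeros, and the classification of real primitive characters, and the only subtlety is to shrink the absolute constant $c_0$ once (if necessary) so that a single value of $c_0$ works in all of the cited inequalities simultaneously.
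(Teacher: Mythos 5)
Your proposal is correct and is, as far as I can tell, essentially the same argument that underlies the cited reference: the paper itself gives no proof but simply appeals to \cite[Corollary 1]{FMT}, which is proved exactly along the lines you sketch (Landau--Page plus Page's effective lower bound for a Siegel zero, plus the elementary observation that a squarefree conductor $q^*\gg(\log x)^{2-o(1)}$ must have a prime factor $\gg\log q^*\gg\log_2 x$). The only small points worth noting explicitly are the one you already flag (that $c_0$ may need to be shrunk once so that the same constant serves both in the zero-free-region statement and in the inequality $1-\b^*\leq c_0/\log x$) and the routine reduction from arbitrary to primitive characters, namely that a non-primitive $\chi$ of modulus $q\leq x$ has the same zeros in $\sigma>0$ as the primitive character inducing it, so the only way $\chi$ could vanish in the exceptional region is if $q^*\mid q$, which your choice $Q(x)=P\mid q^*$ rules out when $\gcd(q,Q(x))=1$.
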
	

\begin{proof}
See, for instance, \cite[Corollary 1]{FMT}.	
\end{proof}

\begin{theorem}[Gallagher's prime number theorem] \label{th3.2}
Let $q$ be a positive integer and suppose that $L(s,\chi) \neq 0$ for all characters $\chi$ of modulus $q$ and $s=\sigma+it$ with $1-\sigma \leq \frac{c}
{\log(q(1+|t|))}$ and some constant $c>0$. Then there is a constant $D=D(c)\geq 1$ depending only on $c$ such that
$$
\#\{p \mbox{ prime} : p \leq x; p \equiv a \pmod q\} \gg 
\frac{x}{\varphi(q) \log x}
$$
for all $a$ with $\gcd(a, q) = 1$ and $x \geq q^D$.		
\end{theorem}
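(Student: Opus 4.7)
I would follow the classical route: convert from counting primes to the Chebyshev function $\psi(x;q,a)$, apply the truncated explicit formula, and bound the sum over nontrivial zeros using the hypothesized zero-free region together with a log-free zero-density estimate. By orthogonality of Dirichlet characters modulo $q$,
\begin{equation*}
\psi(x;q,a) := \sum_{\substack{n\leq x\\ n\equiv a\pmod q}}\Lambda(n) = \frac{1}{\varphi(q)}\sum_{\chi\bmod q}\bar\chi(a)\,\psi(x,\chi),
\end{equation*}
where $\psi(x,\chi)=\sum_{n\leq x}\chi(n)\Lambda(n)$. The truncated explicit formula gives $\psi(x,\chi_0)=x+O(x(\log qx)^2/T)$ and, for $\chi\neq\chi_0$ and $2\leq T\leq x$,
\begin{equation*}
\psi(x,\chi)=-\sum_{\substack{\rho=\beta+i\gamma\\ L(\rho,\chi)=0,\ |\gamma|\leq T}}\frac{x^\rho}{\rho}+O\!\left(\frac{x(\log qx)^2}{T}\right),
\end{equation*}
so, combining,
\begin{equation*}
\psi(x;q,a)=\frac{x}{\varphi(q)}-\frac{1}{\varphi(q)}\sum_{\chi\bmod q}\bar\chi(a)\sum_{|\gamma|\leq T}\frac{x^\rho}{\rho}+O\!\left(\frac{x(\log qx)^2}{T}\right).
\end{equation*}

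Next I would estimate the inner double sum. By hypothesis every zero with $|\gamma|\leq T$ satisfies $\beta\leq 1-c/\log(qT)$, so in particular $x^\beta\leq x\cdot x^{-c/\log(qT)}$. The essential input is Gallagher's log-free zero-density estimate
\begin{equation*}
\sum_{\chi\bmod q}\#\{\rho:\ L(\rho,\chi)=0,\ \beta\geq\sigma,\ |\gamma|\leq T\}\ll(qT)^{A(1-\sigma)}\qquad(1/2\leq\sigma\leq1),
\end{equation*}
for an absolute constant $A>0$. Writing the zero sum as a Stieltjes integral of $x^\sigma/|\rho|$ against this counting function, using the zero-free region to truncate the range of $\sigma$ at $1-c/\log(qT)$, and bounding $\sum_\rho 1/|\rho|$ by the standard $O(\log^2(qT))$, one obtains
\begin{equation*}
\sum_{\chi\bmod q}\Bigl|\sum_{|\gamma|\leq T}\frac{x^\rho}{\rho}\Bigr|\ll x\exp\!\left(-\frac{c'\log x}{\log(qT)}\right)(\log qT)^{O(1)}
\end{equation*}
for some $c'=c'(c,A)>0$.

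The choice $T=x$ kills the explicit-formula error term and renders the above upper bound smaller than $x/(2\varphi(q))$ whenever $\log x\geq D\log q$ for a constant $D=D(c)$ large enough. For such $x$ we therefore obtain $\psi(x;q,a)\geq x/(2\varphi(q))$. Passing from $\psi$ to $\pi$ is routine: the prime-power contribution to $\psi$ is $O(\sqrt x\log x)$, which is negligible against $x/\varphi(q)$ after a mild enlargement of $D$, so $\theta(x;q,a)\gg x/\varphi(q)$, and Abel summation gives the desired bound $\pi(x;q,a)\gg x/(\varphi(q)\log x)$. The principal obstacle is the log-free density estimate: standard density bounds carry an extra $\log(qT)$ in the exponent and only yield the conclusion in a much longer range such as $x\geq\exp((\log q)^{1-\eta})$, so Gallagher's log-free improvement is exactly what makes the polynomial uniformity $x\geq q^D$ possible.
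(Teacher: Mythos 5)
The paper does not prove Theorem~\ref{th3.2} at all; it simply cites Gallagher's paper and Maier's \textit{Adv.\ Math.}\ article. Your sketch reconstructs the classical argument from scratch and correctly names the two essential inputs (the hypothesized Landau zero-free region and Gallagher's log-free density estimate), so the overall strategy is the right one. However, as written the argument does not close, and the gap lies precisely at the point you flag as routine.

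The problem is your choice $T=x$ combined with the bound
\begin{equation*}
\sum_{\chi\bmod q}\Bigl|\sum_{|\gamma|\leq T}\frac{x^\rho}{\rho}\Bigr|\ll x\exp\!\left(-\frac{c'\log x}{\log(qT)}\right)(\log qT)^{O(1)}.
\end{equation*}
With $T=x$ and $q\leq x^{1/D}$ one has $\log(qT)\asymp\log x$, so the exponential factor is $\asymp e^{-c'}$, a fixed constant that does not improve as $D$ increases; the surviving $(\log qT)^{O(1)}=(\log x)^{O(1)}$ factor then makes the right-hand side grow like $x(\log x)^{O(1)}$, which is larger than $x$ for all large $x$, not smaller than $x/(2\varphi(q))$. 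Taking $D$ larger does nothing to repair this. The spurious $(\log qT)^{O(1)}$ arises because you decouple the density estimate from the $1/|\rho|$ sum and invoke the classical $\sum_\rho 1/|\rho|\ll\log^2(qT)$ separately; that step gives away exactly the logarithm the ``log-free'' input is designed to save. The fix is twofold. First, $T$ must be chosen polynomial in $q$ (for instance $T\asymp q\,(\log qx)^2$, which is enough to make the truncation error $O(x\log^2(qx)/T)$ acceptable). Then $\log(qT)\ll\max(\log q,\log_2 x)$, so for $x\geq q^D$ one gets $\log x/\log(qT)\gg\min(D,\log x/\log_2 x)$, and the exponential gain \emph{does} increase without bound as $D\to\infty$, which is what allows $D=D(c)$ to absorb all implied constants. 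Second, one must estimate $\sum_\rho x^\beta/|\rho|$ by dyadic decomposition in $|\gamma|$, applying the log-free density bound $N(\sigma,2^j,q)\ll(q2^j)^{A(1-\sigma)}$ and the zero-free region $\beta\leq 1-c/\log(q2^j)$ in each dyadic block $2^{j-1}\leq|\gamma|<2^j$; summing $2^{-j}$ over $j$ then converges without producing any extraneous power of $\log(qT)$, and the low blocks (where $2^j\ll q$, which dominate the $1/|\rho|$ mass) are precisely the ones where the zero-free region is strongest. Carrying this out gives $\sum_{\chi\neq\chi_0}|\psi(x,\chi)|\ll x\,e^{-c''D}+x^{1/2}(qT)^{O(1)}$ with no log factors, which is $\leq x/2$ once $D$ is large enough; after that your passage from $\psi$ to $\pi$ is fine.
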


\begin{proof}
See \cite{Gallagher} and also \cite[Lemma 2]{Maier}.
\end{proof}


Fix $k\geq4$. Set 
\begin{equation}\label{3.1}
\d=\min\bigg\{\frac{1}{D\big(c_0/(2k)\big)},\frac{1}{100}\bigg\}	
\end{equation}
and, for each $i=0,...,k$, let 
\begin{equation}\label{3.2}
x_i=x^{\d^{2(k-i)}}
\end{equation}
and
$$
Q_0=Q(x_0)\cdot\ldots\cdot Q(x_{k-1}).
$$ 
From the above theorems, we can deduce

\begin{cor}\label{cor3.3}
Let $x$ be large, $1\leq i\leq k$ and $y\geq x_i^{1/2}$. Let $E$ be large enough absolute constant, and suppose that $q$ and $q'$ are coprime with $q$ square-free, $q'$ odd, $x_{i-1}^{1/(2k)}<q^2q'\leq x_{i-1}$ and $\gcd(qq',Q_0)=1.$ Then, for any fixed integer $s\geq0$, 
\begin{equation}\label{3.3}
\sum_{\substack{p\in (y/E, \, y]: \\ \gcd(p-1, \, q^2q'Q_0)=q}}\left(\frac{\varphi(p-1)}{p-1}\right)^s \gg \frac{y}{q\log x}\frac{\varphi(q')}{q'}\left(\frac{\varphi(q)}{q}\right)^s.
\end{equation}
\end{cor}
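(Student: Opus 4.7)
Plan.

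Let $M_1:=q^2q'$ and $M:=M_1Q_0$. By the Chinese Remainder Theorem the condition $\gcd(p-1,M)=q$ decouples at each prime: using squarefreeness of $q$, this amounts to $r\|p-1$ for each $r\mid q$ and $r\nmid p-1$ for each $r\mid q'Q_0$. A short computation shows the number of residues $a\pmod M$ with $\gcd(a,M)=1$ realizing this pattern equals $J^*=J_1\cdot J_2$, where
$$J_1=\varphi(q)\cdot\varphi(q')\prod_{r\mid q'}\tfrac{r-2}{r-1},\qquad J_2=\varphi(Q_0)\prod_{r\mid Q_0}\tfrac{r-2}{r-1},$$
while $\varphi(M)=q\,\varphi(q)\,\varphi(q')\,\varphi(Q_0)$. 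The oddness of $q'$ allows the identity $(r-2)/(r-1)=(1-1/r)(1-(r-1)^{-2})$, so convergence of $\sum_{r\geq 3}(r-1)^{-2}$ over primes yields $\prod_{r\mid q'}(r-2)/(r-1)\gg\varphi(q')/q'$. Each $Q(x_j)$ is either $1$ or a prime of size $\gg\log_2 x$ by Theorem~\ref{th3.1}, so $J_2/\varphi(Q_0)$ is a product of at most $k$ factors each $1-O(1/\log_2 x)$, hence $\asymp 1$.

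For the unweighted ($s=0$) case I first apply Gallagher (Theorem~\ref{th3.2}) to each admissible residue class modulo $M_1$ alone. Since $M_1\le x_{i-1}$ and $\gcd(M_1,Q_0)=1$, Theorem~\ref{th3.1} applied at an appropriate scale $x\in[M_1,x_{k-1}]$ supplies a zero-free region for the characters mod $M_1$; the logarithmic conversion to Gallagher's format introduces a factor of at most $2k$, owing to the lower bound $M_1\ge x_{i-1}^{1/(2k)}$, and hence the effective constant is $\ge c_0/(2k)$---exactly the constant built into (\ref{3.1}). With $D=D(c_0/(2k))\le 1/\delta$, the size requirement $y\ge M_1^{D}$ follows from $y\ge x_i^{1/2}$ and $M_1\le x_{i-1}=x_i^{\delta^2}$. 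Thus Gallagher gives $\gg y/(\varphi(M_1)\log y)$ primes per admissible class, and summing over the $J_1$ classes yields
$$\#\{p\in(y/E,y]:\gcd(p-1,M_1)=q\}\gg\frac{y}{q\log x}\cdot\frac{\varphi(q')}{q'}.$$
To further impose $\gcd(p-1,Q_0)=1$ I subtract the contribution of each $r\mid Q_0$: by Brun--Titchmarsh each such subset has cardinality $\ll 1/(r-1)$ times the total, and since $Q_0$ has at most $k$ distinct prime factors each $\gg\log_2 x$, the total sift loss is $O(k/\log_2 x)=o(1)$, preserving the bound.

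For general $s\ge 1$ I factor $p-1=qn$ with $\gcd(n,q\cdot\mathrm{rad}(q'Q_0))=1$; the weight then splits as $(\varphi(q)/q)^s(\varphi(n)/n)^s$, the first factor delivering the $(\varphi(q)/q)^s$ on the right of (\ref{3.3}). For the second factor I use the multiplicative identity $(\varphi(n)/n)^s=\sum_{d\mid n,\,d\text{ sqfree}}h_s(d)$ with $h_s$ multiplicative and $h_s(r)=(1-1/r)^s-1\in(-1,0)$, swap the order of summation, and apply Gallagher to each inner sum $\#\{p:qd\mid p-1,\text{ other conds}\}$ for $d$ coprime to $qq'Q_0$ and below an appropriate truncation. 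The main term equals the unweighted count multiplied by the Euler product $\prod_{r\nmid qq'Q_0}\bigl(1+h_s(r)/(r-1)\bigr)$, which converges absolutely to a positive constant $c(s)>0$ because each local factor lies in $(0,1]$ and $h_s(r)/(r-1)=O(1/r^2)$. Combining the three steps yields (\ref{3.3}).

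The main obstacle is the second paragraph---arranging Gallagher's zero-free region hypothesis to hold uniformly with the modulus $M_1$, despite the logarithmic mismatch between scales $M_1$ and the scale at which Theorem~\ref{th3.1} is applied. This mismatch is precisely balanced by the factor $1/(2k)$ in the definition (\ref{3.1}) of $\delta$, which is why that particular constant is chosen.
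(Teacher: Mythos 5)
Your treatment of the $s=0$ case is essentially the paper's. You correctly identify that the hypothesis $x_{i-1}^{1/(2k)}<q^2q'\leq x_{i-1}$ lets you convert the zero-free region from Theorem~\ref{th3.1} at scale $x_{i-1}$ into Gallagher's format with constant $c_0/(2k)$, and that the choice $\delta\leq 1/D(c_0/(2k))$ together with $q^2q'\leq x_{i-1}=x_i^{\delta^2}$ and $y\geq x_i^{1/2}$ yields $y\geq (q^2q')^{D}$. Your CRT count of admissible residues ($J_1=\varphi(q)\varphi_1(q')$ in the paper's notation) and the subsequent subtraction of the $\gcd(p-1,Q_0)>1$ contribution by Brun--Titchmarsh are also the paper's steps. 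One small remark: to pass from Gallagher's $\{p\leq y\}$ to the dyadic window $(y/E,y]$ you still need the Brun--Titchmarsh bound for $\{p\leq y/E\}$ and the observation that for $E$ large enough the lower bound dominates; the paper does this explicitly and you should too.

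The gap is in the $s\geq1$ case. After factoring $p-1=qn$ and expanding $(\varphi(n)/n)^s=\sum_{d|n}h_s(d)$, you propose to ``swap the order of summation, and apply Gallagher to each inner sum.'' That step does not go through: $h_s$ is negative on primes, so $h_s(d)$ has sign $(-1)^{\omega(d)}$, and after swapping you need \emph{asymptotics} for the inner counts $N_d$ (or a carefully matched pair of lower and upper sieve bounds), not merely the one-sided lower bounds that Gallagher supplies. If you use Gallagher on the positive terms and Brun--Titchmarsh on the negative ones, the sieve constants do not match, and the resulting lower bound has the form $C_1\cdot(\text{even-}\omega\text{ sum})-C_2\cdot(\text{odd-}\omega\text{ sum})$ with $C_2>C_1$, which is not obviously nonnegative; the putative ``main term'' $\prod_r(1+h_s(r)/(r-1))$ never materializes. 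The paper sidesteps this entirely: for $s\geq 2$ it applies H\"older's inequality, writing the $s=0$ count as a product of the target weighted sum to the power $1/s$ and a complementary sum $\sum_p\big((p-1)/\varphi(p-1)\big)^{s/(s-1)}$ to the power $(s-1)/s$, then bounds the complementary sum from \emph{above} via $\frac{m}{\varphi(m)}\ll\sum_{d|m}\mu^2(d)/d$ and Brun--Titchmarsh; the case $s=1$ then follows from $s=2$ since $\varphi((p-1)/q)/((p-1)/q)\leq 1$. That reduction to an upper bound is the key idea you are missing; without it (or a genuine asymptotic for primes in progressions at the relevant moduli, which is not available here), your route for $s\geq1$ stalls.
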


\begin{proof} Let $\chi$ be a character of modulus $q^2q'$. Since $q^2q'$ is at most $x_{i-1}$ and coprime to $Q_{i-1}$, Theorem \ref{th3.1} implies that $L(\sigma+it,\chi)\neq0$ whenever 
$$
1-\sigma \leq \frac{c_0}{\log\big(x_{i-1}(|t|+1)\big)}.
$$
Because of the lower bound on $q^2q'$, the above inequality is guaranteed if
$$
1-\sigma \leq \frac{c_0/(2k)}{\log\big(q^2q'(|t|+1)\big)}.
$$	
Let $\gcd(a,q^2q')=1$. Note that $q^2q'\leq y^{\d}$. By the definition (\ref{3.1}) of $\d$ and Theorem \ref{th3.2},
$$
\#\{p\leq y: p\equiv a\pmod{q^2q'}\} \gg \frac{y}{\varphi(q^2q')\log x}
$$
and also
$$
\#\{p\leq y/E: p\equiv a\pmod{q^2q'}\} \ll \frac{y/E}{\varphi(q^2q')\log x},
$$
by the Brun-Titchmarsh inequality. Therefore,
\begin{equation}\label{3.4} 
\#\{p\in (y/E,y]: p\equiv a\pmod{q^2q'}\} \gg \frac{y}{\varphi(q^2q')\log x},	
\end{equation}	 
provided that $E$ is large enough.	
	
Recall that $r$ stands for prime numbers. By the Chinese Remainder theorem, there are exactly 
$$
\varphi_1(q'):=q'\prod_{r|q'}\frac{r-2}{r}\asymp q'(\varphi(q')/q')^2=\frac{\varphi(q')^2}{q'}
$$ 
residue classes $a'$ modulo $q'$ with $\gcd(a',q')=\gcd(a'-1,q')=1$, and exactly $\varphi(q)$ residues classes $a''$ modulo $q^2$ with $\gcd(a''-1,q^2)=q$. Thus, there are $\asymp \varphi(q)\varphi(q')^2/q'$ residue classes $a$ modulo $q^2q'$ with $\gcd(a,q^2q')=1$ and $\gcd(a-1,q^2q')=q$. Summing (\ref{3.4}) over all such $a$, we see that
\begin{equation}\label{3.5} 
\#\{p\in (y/E,y]: \gcd(p-1,q^2q')=q\} \gg \frac{y}{q\log x}\frac{\varphi(q')}{q'}.
\end{equation}
If $Q_0=1$, we are done with the case $s=0$. Otherwise, $Q_0$ is the product of at most $k$ primes $Q_j'$, each of which is $\gg\log_2 x$. Then we can argue similarly and apply the Brun-Titchmarsh inequality again to get
$$
\#\{p\leq y: \gcd(p-1,q^2q')=q, \gcd(p-1,Q_j')=Q_j'\} \ll \frac{y}{q\varphi(Q_j')\log x}\frac{\varphi(q')}{q'}
$$
for any prime divisor $Q_j'$ of $Q_0$. Since $\varphi(Q_j')= Q_j'-1\gg \log_2 x$,
$$
\#\{p\leq y: \gcd(p-1,q^2q')=q, \gcd(p-1,Q_0)>1\} \ll \frac{y}{q\log x\log_2x}\frac{\varphi(q')}{q'},
$$
which, together with (\ref{3.5}), yields the claim (\ref{3.3}) for $s=0$. 

Now we turn to the case $s\geq1$. Since 
$$
\frac{\varphi\big((p-1)/q\big)}{(p-1)/q} \leq 1,
$$
we may assume that $s\geq2$. By (\ref{3.3}) for $s=0$ and H\"older's inequality,
$$
\frac{y}{q\log x}\frac{\varphi(q')}{q'}\ll \bigg(\sum_{\substack{p\in (y/E, \, y]: \\ \gcd(p-1, \, q^2q'Q_0)=q }}\left(\frac{\varphi(p-1)}{p-1}\right)^s\bigg)^{1/s}\bigg(\sum_{\substack{p\leq y: \\ \gcd(p-1, \, q^2q')=q }}\left(\frac{p-1}{\varphi(p-1)}\right)^{s/(s-1)}\bigg)^{(s-1)/s},
$$
so that (\ref{3.3}) will follow from the inequality
\begin{equation}\label{3.6}
S=S(y,q,q'):=\sum_{\substack{p\leq y: \\ \gcd(p-1, \, q^2q')=q }}\left(\frac{(p-1)/q}{\varphi((p-1)/q)}\right)^2 \ll \frac{y}{q\log x}\frac{\varphi(q')}{q'}.
\end{equation} 
For a prime $p$ involved in the above sum, we have 
$$
\frac{(p-1)/q}{\varphi((p-1)/q)}=\prod_{\substack{r|p-1\\ r\notdivides q}}\big(1-1/r\big)^{-1} \ll \prod_{\substack{r|p-1\\ r\notdivides q}}(1+1/r)=\sum_{\substack{d|p-1\\ \gcd(d,q)=1}}\frac{\mu^2(d)}{d},
$$
and, hence,
\begin{align*}
& S \ll \sum_{\substack{p\leq y: \\ \gcd(p-1, \, q^2q')=q }}\sum_{\substack{d_1,d_2|p-1\\ \gcd(d_1d_2,q)=1}}\frac{\mu^2(d_1)\mu^2(d_2)}{d_1d_2}\leq\\
& \sum_{\substack{d_1,d_2\leq y,\\ \gcd(d_1d_2,qq')=1}}\frac{1}{d_1d_2}\#\big\{p\leq y: p\equiv 1\pmod{[d_1,d_2]}, \gcd(p-1,q^2q')=q\big\}.	
\end{align*}
For fixed $d_1,d_2$, there are $\varphi(q)\varphi_1(q')$ residue classes $b$ modulo $[d_1,d_2,q^2q']$ for which $b\equiv 0\pmod{[d_1,d_2]}$ and $\gcd(b,q^2q')=q$. 
Thus, if $[d_1,d_2,q^2q']>y$, then trivially
$$
\#\big\{p\leq y: p\equiv 1\pmod{[d_1,d_2]}, \gcd(p-1,q^2q')=q\big\}\leq \varphi(q)\varphi_1(q')\leq y^{\d},
$$
and 
$$
\#\big\{p\leq y: p\equiv 1\pmod{[d_1,d_2]}, \gcd(p-1,q^2q')=q\big\}\ll 	\frac{y\varphi(q)\varphi_1(q')}{\varphi([d_1,d_2,q^2q']) \log(2y/[d_1,d_2,q^2q'])}
$$
by the Brun-Titchmarsh inequality otherwise. In addition, if $y^{1/2}<[d_1,d_2,q^2q']\leq y$, then the upper bound in the latter case is $\ll y^{0.51+\d}\leq y^{0.52}$ (since $\varphi(m)\gg m/\log_2m$). Note also that $\varphi([d_1,d_2,q^2q'])=\varphi([d_1,d_2])q\varphi(q)\varphi(q')$ and $\varphi_1(q')/\varphi(q')\asymp \varphi(q')/q'$. Using 
$$
\sum_{d_1,d_2\leq y}\frac{1}{d_1d_2} \ll (\log y)^2,
$$
and putting all together, we obtain
$$
S\ll \sum_{\substack{d_1,d_2: \\ [d_1,d_2,q^2q']\leq y^{1/2}}}\frac{1}{d_1d_2\varphi([d_1,d_2])}\frac{y\varphi(q')}{qq'\log x}+y^{0.53}.
$$
Now (\ref{3.6}) follows, since the series
\begin{equation*} 
\sum_{d_1,d_2=1}^{\infty}\frac{1}{d_1d_2\varphi([d_1,d_2])} 
\end{equation*}
converges and $qq'\leq y^{\d}\leq y^{1/100}$. This concludes the proof.
\end{proof}

Now we are ready to prove the lower bound in Theorem \ref{th1}. Assume that a ${k \choose 2}$-tuple \\$\m=(m_A: |A|=2)$ satisfies  
\begin{equation}\label{3.7}
\gcd(m_{A_1},m_{B_1})=\gcd(m_{A_2},m_{B_2})
\end{equation}
for any $A_1,A_2,B_1,B_2$ with $|A_1|=|A_2|=|B_1|=|B_2|=2$ and $A_1\cup B_1=A_2\cup B_2$, and let
$$
M_i=M_i(\m)=\lcm\{m_A: A\ni i, |A|=2\}.
$$
We claim that 
\begin{equation}\label{3.71}
\gcd(M_i,M_j)=m_{ij}
\end{equation}
for any $1\leq i<j\leq k$. Indeed, fix a prime $p$ and assume that, for each $|A|=2$, the number $\a_A=\a_A(p)\geq0$ is such that $p^{\a_A}||m_A$, and let us write $\a_{ij}$ instead of $\a_{\{i,j\}}$. Then (\ref{3.8}) is equivalent to 
\begin{equation}\label{3.72}
\min\Big\{\max_{A\ni i}\a_A, \max_{B\ni j}\a_B\Big\}=\a_{ij}
\end{equation}
for each $p$. Clearly, the left side is at least $\a_{ij}$. Now if $s_1,s_2$ are such that $\max_{A\ni i}\a_A=\a_{is_1}\leq \a_{js_2}=\max_{B\ni j}\a_B$, we have by (\ref{3.7})
$$
\a_{is_1}=\min\{\a_{is_1},\a_{js_2}\}= \min\{\a_{ij},\a_{s_1s_2}\}\leq \a_{ij},
$$
which gives (\ref{3.72}), and (\ref{3.71}) follows. In particular, we see that a ${k \choose 2}$-tuple $\m=\{m_{ij}\}$ occurs as $\{\gcd(m_i,m_j)\}$ for some numbers $m_1,\ldots, m_k$ if and only it obeys (\ref{3.7}).

For a vector $\l=(l_1,\ldots,l_k)$ and $\m$ with (\ref{3.7}), define 
\begin{equation}\label{3.8}
\P(\l,\m)=\#\{(p_1,\ldots,p_k): E^{l_i-1}<p_i\leq E^{l_i}, \gcd(p_i-1,p_j-1)=m_{ij}\}.
\end{equation}
Assume that $(p_1,\ldots,p_k)$ is counted in $\P(\l,\m)$; by Lemma \ref{lem2.1}, the inequality
\begin{equation}\label{3.9}
\frac{\prod_{\substack{2\leq j\leq k\\ j \mbox{\tiny \, even}}}{\prod_{C\subseteq [k], |C|=j}m_C}}{\prod_{\substack{3\leq i\leq k\\ i \mbox{\tiny   \,odd}}}{\prod_{B\subseteq [k], |B|=i}m_B}} \geq E^{l_1+\ldots+l_k}x^{-1}
\end{equation}
guarantees that $[p_1-1,\ldots,p_k-1]\leq x$. Then
$$
P_k(x)\geq \sum_{\m}\sum_{\l}\P(\l,\m),
$$
where the summation is over $\m,\l$ which satisfy (\ref{3.7}) and (\ref{3.9}). In order to extract the lower bound for $P_k(x)$, we put the further restrictions on $\m$ and $\l$: we require that
\begin{align}
& \mu^2(m_A)=1, \, \gcd(m_A,Q_0)=1 	\mbox{ for all $|A|=2$};  \label{3.10} \\
&  x_0^{1/(4k)} \leq M_1 \leq \lcm\{m_A: |A|=2\} \leq x_0^{1/2}; \label{3.11}\\
& x_i^{1/2}<E^{l_i}\leq x_i^{2/3} \mbox{ for all $i=1,\ldots,k$}. \label{3.12}
\end{align}
(recall the definition (\ref{3.2}) of $x_i$). In particular, $l_i\asymp \log x$ for all $i$. Fix such a pair $(\m,\l)$. A $k$-tuple of primes $(p_1,\ldots,p_k)$ is counted in $\P(\l,\m)$ iff $E^{l_i-1}<p_i\leq E^{l_i}$ for all $i$ and
\begin{align*} 
& p_1\equiv 1\pmod {M_1}, \\
& p_2\equiv 1\pmod {M_2} \mbox{ and } \gcd\left(\frac{p_2-1}{m_{12}},\frac{p_1-1}{m_{12}}\right)=1,\\
& \ldots \\
& p_k\equiv 1\pmod {M_k} \mbox{ and } \gcd\left(\frac{p_k-1}{m_{ik}},\frac{p_i-1}{m_{ik}}\right)=1 \mbox{ for all $i=1,\ldots,k-1$}
\end{align*}
(note that all $M_i$ are square-free, are at most $x_0^{1/2}$ and $\gcd(M_i,M_j)=m_{ij}$ because of (\ref{3.7})). For fixed primes $p_1,\ldots,p_{i-1}$, denote
$$
F_1=1, \quad F_i=F_i(p_1,\ldots,p_{i-1}):=\lcm\bigg\{\frac{p_j-1}{m_{ji}}: j<i\bigg\}, i=2,\ldots,k.
$$
Then $\P(\l,\m)$ is at least the number of $k$-tuples $(p_1,\ldots,p_k)$ such that, for each $i=1,\ldots,k$, $E^{l_i-1}<p_i\leq E^{l_i}$, $\gcd(p_i-1,Q_0)=1$ and
$$
\gcd(p_i-1, M_i^2)=M_i \quad\mbox{ and }\quad \gcd(p_i-1,F_i)=1,
$$
(note that $\gcd(M_i,F_i)=1$ for all $i$). For fixed $p_1,\ldots,p_{k-1}$, by Corollary \ref{cor3.3} (with $i=k$, $y=E^{l_k}$, $q=M_k$, $q'=F_k$, and $s=0$) there are 
$$
\gg \frac{E^{l_k}}{M_k\log x}\prod_{i=1}^{k-1}\frac{\varphi(p_i-1)}{p_i-1}
$$
choices of $p_k\in (E^{l_k-1}, E^{l_k}]$, since crudely by (\ref{3.10}) and (\ref{3.11}), 
\begin{align}\label{3.13}
&M_k^2F_k\geq F_k \geq \frac{p_{k-1}-1}{m_{k-1,k}} \geq \frac{E^{l_{k-1}-1}-1}{m_{k-1,k}} \geq x_{k-1}^{1/3}, \\
& M_k^2F_k \leq x^{\d^{2k}}\prod_{i=1}^{k-1}(p_i-1)\leq x^{\d^{2k}+2(\d^{2k-2}+...+\d^2)/3}\leq x^{\d^2}=x_{k-1} \label{3.14} 
\end{align}
and also $E^{l_k}>x_k^{1/2}$. Thus,
$$
\P(\l,\m) \gg  
\frac{E^{l_k}}{M_k\log x}\sum_{p_1}\frac{\varphi(p_1-1)}{p_1-1}\ldots \sum_{p_{k-1}}\frac{\varphi(p_{k-1}-1)}{p_{k-1}-1}.
$$
Now, for fixed $p_1,\ldots,p_{k-2}$, we can apply Corollary \ref{cor3.3} with $i=k-1$, $y=E^{l_{k-1}}$, $q=M_{k-1}$, $q'=F_{k-1}$, and $s=1$ (since, similarly to (\ref{3.13}) and (\ref{3.14}), $x_{k-2}^{1/3}\leq M_{k-1}^2F_{k-1} \leq x_{k-2}$) to obtain
$$
\P(\l,\m) \gg  
\frac{E^{l_k+l_{k-1}}}{M_kM_{k-1}(\log x)^2}\frac{\varphi(M_{k-1})}{M_{k-1}}\sum_{p_1}\left(\frac{\varphi(p_1-1)}{p_1-1}\right)^2\ldots \sum_{p_{k-2}}\left(\frac{\varphi(p_{k-2}-1)}{p_{k-2}-1}\right)^2.
$$
Arguing similarly and applying Corollary \ref{cor3.3} consecutively $k-3$ more times, we see that
$$
\P(\l,\m) \gg  \frac{E^{\sum_{i=2}^kl_i}}{\left(\prod_{i=2}^kM_i\right)(\log x)^{k-1}}\left(\frac{\varphi(\prod_{i=2}^kM_i)}{\prod_{i=2}^kM_i}\right)^{k-2}\sum_{\substack{p_1\equiv 1\pmod{M_1}\\ E^{l_1-1}<p_1-1\leq E^{l_1}}}\left(\frac{\varphi(p_1-1)}{p_1-1}\right)^{k-1}. 
$$
Now the final application of Corollary \ref{cor3.3} to the sum over $p_1$ (with $i=1$, $y=E^{l_1}$, $q=M_1$, $q'=F_1=1$, $s=k-1$) gives 
\begin{equation}\label{3.15}
\P(\l,\m) \gg  \frac{E^{\sum_{i=1}^kl_i}}{\left(\prod_{i=1}^kM_i\right)(\log x)^k}\left(\frac{\varphi(\prod_{i=1}^kM_i)}{\prod_{i=1}^kM_i}\right)^{k-1}. 	
\end{equation}
Set
\begin{equation}\label{3.16}
b(\m)=\prod_{p|\prod_Am_A}\left(1-\frac1p\right)^{k-1}
\end{equation}
Then (\ref{3.15}) implies (the summation below is over $\m,\l$ satisfying (\ref{3.7}), (\ref{3.9})-(\ref{3.12}))
$$
P_k(x) \geq \sum_{\m,\l}\P(\l,\m) \gg (\log x)^{-k}\sum_{\m}\frac{b(\m)}{\prod_{i=1}^kM_i}\sum_{l_1,\ldots,l_{k-1}}E^{l_1+\ldots+l_{k-1}}\sum_{l_k}E^{l_k};
$$
by (\ref{3.9}), the innermost sum here is over $l_k$ with
$$
E^{l_k} \leq \frac{\prod_{\substack{2\leq j\leq k\\ j \mbox{\tiny \, even}}}{\prod_{C\subseteq [k], |C|=j}m_C}}{\prod_{\substack{3\leq i\leq k\\ i \mbox{\tiny   \,odd}}}{\prod_{B\subseteq [k], |B|=i}m_B}} E^{-(l_1+\ldots+l_{k-1})}x
$$
By (\ref{3.12}), the right side is at least $1$, and thus 
$$
\sum_{l_1,\ldots,l_{k-1}}E^{l_1+\ldots+l_{k-1}}\sum_{l_k}E^{l_k} \gg x(\log x)^{k-1}\frac{\prod_{\substack{2\leq j\leq k\\ j \mbox{\tiny \, even}}}{\prod_{C\subseteq [k], |C|=j}m_C}}{\prod_{\substack{3\leq i\leq k\\ i \mbox{\tiny   \,odd}}}{\prod_{B\subseteq [k], |B|=i}m_B}}.
$$
It follows that 
\begin{equation}\label{3.17}
P_k(x) \gg \frac{x}{\log x}\sum_{\m}\frac{\prod_{\substack{2\leq j\leq k\\ j \mbox{\tiny \, even}}}{\prod_{C\subseteq [k], |C|=j}m_C}}{\prod_{\substack{3\leq i\leq k\\ i \mbox{\tiny   \,odd}}}{\prod_{B\subseteq [k], |B|=i}m_B}}\frac{1}{\prod_{i=1}^kM_i}b(\m).
\end{equation}
Using Lemma \ref{lem2.1} to rewrite the least common multiples $M_i$, we get
$$
\prod_{i=1}^kM_i=\frac{\prod_{\substack{2\leq j\leq k\\ j \mbox{\tiny \, even}}}{\prod_{C\subseteq [k], |C|=j}m_C^j}}{\prod_{\substack{3\leq i\leq k\\ i \mbox{\tiny   \,odd}}}{\prod_{B\subseteq [k], |B|=i}m_B^i}}
$$
Thus by (\ref{3.17}), 
\begin{equation}\label{3.18}
P_k(x) \gg \frac{x}{\log x}\sum_{\m}\frac{b(\m)}{D(\m)},
\end{equation}
where the summation is over $\m$ satisfying (\ref{3.7}), (\ref{3.10}), (\ref{3.11}), and
\begin{equation}\label{3.19}
D(\m)=\frac{\prod_{\substack{2\leq j\leq k\\ j \mbox{\tiny \, even}}}{\prod_{C\subseteq [k], |C|=j}m_C^{j-1}}}{\prod_{\substack{3\leq i\leq k\\ i \mbox{\tiny   \,odd}}}{\prod_{B\subseteq [k], |B|=i}m_B^{i-1}}}.
\end{equation}
Now we simplify this expression.
 
\begin{lem}\label{lem3.4}
Let ${k\choose 2}$-tuple $\m=(m_A: A\subseteq [k], |A|=2)$ satisfy (\ref{3.7}). Then
\begin{equation}\label{3.20}
D(\m)=\lcm\{m_A: |A|=2\}.	
\end{equation}
Besides, for any prime $p$,
\begin{equation}\label{3.21}
\#\{\m: (\ref{3.7}), D(\m)=p\}=2^k-k-1.
\end{equation}
\end{lem}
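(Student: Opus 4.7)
The plan is to verify both identities of Lemma~\ref{lem3.4} one prime at a time. Fix a prime $p$ and set $\alpha_i := v_p(M_i(\m))$ for each $i \in [k]$. By (\ref{3.71}), $v_p(m_{ij}) = \min(\alpha_i, \alpha_j)$, so for each $S \subseteq [k]$ with $|S| \geq 2$,
$$
v_p(m_S) = \min_{i, j \in S,\ i \neq j} \min(\alpha_i, \alpha_j) = \min_{i \in S} \alpha_i.
$$
After relabeling so that $\alpha_1 \leq \cdots \leq \alpha_k$, this becomes the workhorse identity $v_p(m_S) = \alpha_{\min S}$, which drives both parts of the lemma.

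For (\ref{3.20}), the right-hand side is immediate:
$$
v_p\bigl(\lcm\{m_A : |A|=2\}\bigr) = \max_{i<j} \min(\alpha_i, \alpha_j) = \alpha_{k-1}.
$$
For the left-hand side, substituting $v_p(m_S) = \alpha_{\min S}$ into (\ref{3.19}) and grouping the even- and odd-size contributions with a sign $(-1)^s$ yields
$$
v_p(D(\m)) = \sum_{s=2}^{k} (-1)^s (s-1) \sum_{|S|=s} \alpha_{\min S} = \sum_{i=1}^{k-1} \alpha_i \sum_{r=1}^{n}(-1)^{r+1} r \binom{n}{r},
$$
where $n := k-i$, the swap of summations uses $\#\{S : |S|=s,\ \min S = i\} = \binom{k-i}{s-1}$, and we set $r = s-1$. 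Via $r\binom{n}{r} = n\binom{n-1}{r-1}$, the inner sum reduces to $n\sum_{j=0}^{n-1}(-1)^j \binom{n-1}{j}$, which vanishes for $n \geq 2$ and equals $1$ for $n = 1$. Hence only $i = k-1$ survives and contributes $\alpha_{k-1}$, matching the right-hand side.

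For (\ref{3.21}), combining $D(\m) = p$ with (\ref{3.20}) forces every $m_A \in \{1, p\}$ with at least one $m_A = p$; in particular $v_p(m_A) \in \{0,1\}$ and therefore $\alpha_i \in \{0,1\}$. Setting $I := \{i \in [k] : \alpha_i = 1\}$, the relation $m_{ij} = p^{\min(\alpha_i,\alpha_j)}$ shows $m_{ij} = p$ iff $\{i,j\} \subseteq I$, while nontriviality becomes $|I| \geq 2$. The assignment $I \mapsto \m$ is injective on $\{I : |I| \geq 2\}$ because $I$ is recovered as $\bigcup\{A : m_A = p\}$, so the count reduces to $\#\{I \subseteq [k] : |I| \geq 2\} = 2^k - k - 1$. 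The only step that is more than bookkeeping is the collapse of the alternating binomial sum for $v_p(D(\m))$; everything else follows directly from the $M_i$-structure and (\ref{3.71}) already in place.
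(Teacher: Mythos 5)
Your proof is correct and follows essentially the same route as the paper. The key identification is the same: you use (\ref{3.71}) to write $v_p(m_{ij}) = \min(v_p(M_i), v_p(M_j))$ — the paper phrases this as $\m$ arising as $(\gcd(m_i,m_j))$ for some $k$-tuple, with the $M_i$'s serving as that tuple — then sort the exponents and collapse the alternating binomial sum, and for (\ref{3.21}) you reduce to counting subsets $I\subseteq[k]$ with $|I|\geq 2$ and show the parametrization is injective (the paper's $\psi$ is the same map in $\{0,1\}^k$ notation). One small omission: for (\ref{3.21}) you check that every valid $\m$ comes from a unique $I$ with $|I|\geq 2$, but not explicitly that every such $I$ yields an $\m$ satisfying (\ref{3.7}) with $D(\m)=p$; this converse is immediate (take $M_i=p$ for $i\in I$, $M_i=1$ otherwise, and apply (\ref{3.20})), but it should be stated to justify equality rather than just an upper bound on the count.
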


\begin{proof}
As we saw, if $\m$ obeys (\ref{3.7}), then it occurs as $(\gcd(m_i,m_j): 1\leq i<j\leq k)$ for some $k$-tuple $(m_1,\ldots,m_k)$. Suppose that, for a prime $p$, we have $p^{\a_s}||m_s$ with nonnegative $\a_s$; without loss of generality, we may assume that $\a_1\leq \ldots \leq \a_k$. By (\ref{3.19}), $p^{\nu}||D(\m)$ is equivalent to 
\begin{equation*} 
\sum_{j=2}^{k}(-1)^{j}(j-1)\sum_{s=1}^{k-j+1}{k-s \choose j-1}\a_s=\nu.
\end{equation*}
The left side is
$$
\sum_{s=1}^{k-1}\a_s\sum_{j=1}^{k-s}(-1)^{j-1}j{k-s \choose j}=\a_{k-1},
$$
so that $p^{\a_{k-1}}||D(\m)$, and (\ref{3.20}) follows.

Now we turn to the second claim, which corresponds to the case $\nu=1$. We can argue similarly and assume that $(m_1,\ldots,m_k)$ is equal, up to a permutation, to one of the $k$-tuples
$$
(p^{\a_k},p,p^{\a_{k-2}},\ldots,p^{\a_1}),  
$$
where $\a_k\geq1$ and $\a_j\in\{0,1\}$ for all $1\leq j\leq k-2$. We may assume that $\a_k=1$, since it does not influence the resulting ${k\choose 2}$-tuple $\m$. There are exactly
$$
2^k-{k \choose 0}-{k \choose 1}=2^k-k-1
$$
$k$-tuples of the above form (since we just need to count those which have at least two coordinates which are equal to $1$). We claim that these $k$-tuples produce distinct ${k\choose 2}$-tuples $\m=(m_A: |A|=2)$; since each of them satisfies $D(\m)=p$, the claim (\ref{3.21}) will follow. So, it suffices to prove the injectivity of the map $\psi \colon X \to Y$,
where 
\begin{align*}
& X=\bigg\{\ba=(\a_1,\ldots,\a_k)\in \{0,1\}^k: \sum_{i=1}^k\a_i\geq2\bigg\}, \\
& Y=\big\{(\a_{ij}): \a_{ij}\in\{0,1\}, 1\leq i<j\leq k\big\},
\end{align*}
and $\psi$ is defined by 
$$
\psi(\ba)=\big(\min\{\a_i,\a_j\}: 1\leq i<j\leq k\big).
$$
Suppose that we know $\psi(\ba)$; since there are at least two of $\a_i$'s which are equal to $1$, we see that at least one of $\a_{ij}$ is equal to $1$. If this is $\a_{i_1i_2}$, then we must have $\a_{i_1}=\a_{i_2}=1$. Now one can easily recover all the other $\a_s$: clearly, $\a_s=1$ iff $\a_{i_1s}=1$. Thus, $\psi$ is injective and the number of ${k\choose 2}$-tuples $\m$ with $D(\m)=p$ is equal to $2^k-k-1$, as desired.
\end{proof}

\medskip 

The inequality (\ref{3.18}) can be rewritten as
\begin{equation}\label{3.22} 
P_k(x) \gg \frac{x}{\log x}\sum_{m}\frac{\mu^2(m)}{m} \sum_{\substack{\m: D(\m)=m\\ (\ref{3.7}), (\ref{3.10}), (\ref{3.11})}}b(\m)
\end{equation}
Define 
$$
g_k(m)=\mu^2(m)\sum_{\substack{\m: D(\m)=m\\ \m \mbox{ \tiny{obeys} } (\ref{3.7})}}b(\m).
$$
The sum in the right side of (\ref{3.22}) is closely related to the sum
$$
\sum_{x_0^{1/4}<m\leq x_0^{1/2}}\frac{g_k(m)}{m},
$$
which one can compute using the following classical estimates for the average value of a multiplicative function.

\begin{lem}\label{lem3.5}
Let $f$ be a nonnegative multiplicative function such that 
	
(a) there is $\theta>1$ with 
\begin{equation*}
\sum_{p\leq x}\sum_{\nu\geq1}\frac{f(p^{\nu})^{\theta}}{p^{\nu}} \ll \log_2 x; 
\end{equation*}
	
(b)	for some $\kappa\geq0$, one has  
\begin{equation*}
\sum_{p\leq x}\frac{f(p)\log p}{p}= \kappa \log x+O(1).
\end{equation*}
Then
$$
\sum_{n\leq x}\frac{f(n)}{n}=a(f,\kappa)(\log x)^{\kappa}+O\left((\log x)^{\kappa-1}\right), \quad\quad x\geq2, 
$$
for some constant $a(f,\kappa)>0$.
\end{lem}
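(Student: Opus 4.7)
The statement is a classical mean-value estimate for non-negative multiplicative functions, of Selberg-Delange / Wirsing-Hal\'asz type; with hypotheses of this exact shape it appears in standard analytic number theory references (for instance Tenenbaum's textbook, Chapter III.3, or Halberstam and Richert). My preferred route is simply to cite such a reference, as no new idea is required for the argument.

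For a self-contained sketch, I would study the Dirichlet series $F(s) = \sum_{n\geq1} f(n) n^{-s}$, absolutely convergent for $\Re s > 1$. Hypothesis (b) together with partial summation gives
$$
\sum_p \frac{f(p)}{p^s} = \kappa\log\frac{1}{s-1} + O(1) \qquad (s \to 1^+),
$$
and taking the logarithm of the Euler product for $F$ yields the factorization $F(s) = \zeta(s)^\kappa G(s)$, where
$$
G(s) = \prod_p \bigl(1 - p^{-s}\bigr)^\kappa \Bigl(1 + \sum_{\nu\geq1} \frac{f(p^\nu)}{p^{\nu s}}\Bigr).
$$
Hypothesis (a) with $\theta > 1$ is what matters here: H\"older's inequality applied to the bound on $\sum_p \sum_{\nu\geq1} f(p^\nu)^\theta / p^\nu$ shows that the Euler product defining $G$ converges absolutely in a half-plane $\Re s > 1 - \eta$ for some small $\eta > 0$, so that $G$ extends holomorphically there with $G(1) > 0$.

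Since $\sum_n (f(n)/n) n^{-s} = F(s+1)$ has a singularity of type $s^{-\kappa}$ at $s=0$, a Perron / Hankel-contour argument of Selberg-Delange type extracts
$$
\sum_{n\leq x}\frac{f(n)}{n} = \frac{G(1)}{\Gamma(\kappa+1)} (\log x)^\kappa + O\bigl((\log x)^{\kappa-1}\bigr),
$$
which is the claimed asymptotic with $a(f,\kappa) = G(1)/\Gamma(\kappa+1) > 0$. Alternatively, one first proves $\sum_{n\leq x} f(n) \sim G(1) x (\log x)^{\kappa-1}/\Gamma(\kappa)$ and then applies partial summation. The main technical obstacle is the analytic continuation and polynomial growth control of $G(s)$ strictly to the left of $\Re s = 1$, required to justify the contour shift; this is precisely what hypothesis (a), with $\theta$ strictly exceeding $1$, supplies via the domination of the $\nu\geq2$ contributions.
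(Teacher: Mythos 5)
Your first route --- citing a standard reference --- is exactly what the paper does: its entire proof of the lemma is the sentence ``This is well-known and due to Wirsing \cite{Wir}; see also \cite[Exercise~14.6]{D}.'' So on that count you and the paper agree, and there is nothing more to say.

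Your self-contained Selberg--Delange sketch, however, has a genuine gap. Hypotheses (a) and (b) are of Mertens type, and they do \emph{not} furnish the analytic continuation of $G(s)=\zeta(s)^{-\kappa}F(s)$ past the line $\Re s=1$ that a Hankel-contour or Perron argument needs. The Euler factor of $G$ at $p$ has the shape
$1+(f(p)-\kappa)p^{-s}+O(p^{-2\Re s})$, so absolute convergence of the Euler product on a half-plane $\Re s>1-\eta$ would require $\sum_p |f(p)-\kappa|\,p^{-\sigma}<\infty$ for some $\sigma<1$. Neither (a) nor (b) gives this: (b) only controls the \emph{signed} sum $\sum_{p\leq x}(f(p)-\kappa)(\log p)/p=O(1)$, and (a), via H\"older, controls the $\nu\geq2$ contributions but leaves the $\nu=1$ residual $\sum_p (f(p)-\kappa)p^{-s}$ untouched. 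A concrete obstruction: take $f$ completely multiplicative with $f(p)=\kappa+\varepsilon_p$ where $\varepsilon_p\in\{\pm1\}$ is chosen so that $\sum_{p\leq x}\varepsilon_p(\log p)/p=O(1)$; then (a) and (b) hold, yet $\sum_p|f(p)-\kappa|p^{-\sigma}$ diverges for every $\sigma\leq1$, so $G$ has no absolutely convergent Euler product to the left of $\Re s=1$. Under Mertens-type hypotheses the correct self-contained route is the \emph{elementary} one: Wirsing's iterative argument (or the Levin--Fa\u{\i}nle\u{\i}b theorem), which compares $\sum_{n\leq x}f(n)\log n$ with $\sum_{p\leq x}f(p)\log p\cdot\sum_{m\leq x/p}f(m)$ and feeds (a), (b) in directly, with no recourse to analytic continuation. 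If you want the complex-analytic proof, you must strengthen (b) to something like ``$\sum_p f(p)p^{-s}-\kappa\log\zeta(s)$ extends holomorphically and with polynomial growth to a zero-free region of $\zeta$,'' which is a materially different hypothesis.
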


\begin{proof}
This is well-known and due to Wirsing \cite{Wir}; see also \cite[Exercise 14.6]{D}.	
\end{proof}

Clearly, the above function $g_k$ is multiplicative and supported on square-free numbers. Also, $g_k(p)=(2^k-k-1)(1+O(1/p))$ due to Lemma \ref{lem3.4} and the definition (\ref{3.16}) of $b(\m)$, and thus Lemma \ref{lem3.5} with $\kappa=2^k-k-1$ implies 
$$
\sum_{x_0^{1/4}<m\leq x_0^{1/2}}\frac{g_k(m)}{m} \asymp (\log x)^{2^k-k-1}
$$
Also, since the number $Q_0$ is either equal to one or has at most $k$ prime divisors which are all $\gg\log x$, we have
$$
\sum_{\substack{x_0^{1/4}<m\leq x_0^{1/2}\\ \gcd(m,Q_0)>1}}\frac{g_k(m)}{m} \ll  (\log x)^{2^k-k-2},
$$
and, hence,
$$
\sum_{\substack{x_0^{1/4}<m\leq x_0^{1/2}\\ \gcd(m,Q_0)=1}}\frac{g_k(m)}{m} \gg  (\log x)^{2^k-k-1}.
$$
It follows that
$$
\sum_{x_0^{1/4}<m\leq x_0^{1/2}}\frac{\mu^2(m)}{m} \sum_{\substack{\m: D(\m)=m\\ (\ref{3.7}), (\ref{3.10})}}b(\m) \gg  (\log x)^{2^k-k-1}.
$$
It remains to take care of the condition (\ref{3.11}). Since $x_0^{1/4}<D(\m)\leq x_0^{1/2}$ and 
$$
D(\m)=\lcm\{m_A: |A|=2\} \leq M_1\cdot\ldots\cdot M_k,
$$
we see that there is $1\leq i\leq k$ with $M_i>D(\m)^{1/k}\geq x_0^{1/(4k)}$. Define
$$
S_i=\sum_{x_0^{1/4}<m\leq x_0^{1/2}}\frac{\mu^2(m)}{m} \sum_{\substack{\m: D(\m)=m\\ (\ref{3.7}), (\ref{3.10})\\ M_i>x_0^{1/(4k)}}}b(\m)
$$
Then
$$
\sum_{i=1}^kS_i\geq \sum_{x_0^{1/4}<m\leq x_0^{1/2}}\frac{\mu^2(m)}{m} \sum_{\substack{\m: D(\m)=m\\ (\ref{3.7}), (\ref{3.10})}}b(\m) \gg  (\log x)^{2^k-k-1}.
$$
By symmetry, all $S_i$ are equal. Therefore, from (\ref{3.22}) we have
$$
P_k(x) \gg \frac{x}{\log x}S_1 \gg x(\log x)^{2^k-k-2},
$$
which is the lower bound in (\ref{2.1}). This completes the proof of the lower bound in Theorem \ref{th1}.


\section{The upper bound} \label{sec4}

We induct on $k$. Fix $k\geq4$ and assume that
$$
P_{k-1}(x)\ll x(\log x)^{2^{k-1}-k-1}
$$
for all $x$. This allows us to count $k$-primes $(p_1,\ldots,p_k)$ with $[p_1-1,\ldots,p_k-1]\leq x$ and distinct $p_j$, while estimating $P_k(x)$.

All subsets of $[k]$ considered below are assumed to be nonempty. Let a $k$-tuple of primes $\p=(p_1,\ldots,p_k)$ be counted in $P_k(x)$, and recall the quantities $u_S(\p)$ defined in (\ref{2.7}). Then
\begin{equation}\label{4.1}
[p_1-1,\ldots,p_k-1]=\prod_{S\subseteq [k]}u_S(\p) \leq x,
\end{equation}
and recall also that
\begin{equation}\label{4.2}
\gcd\left(\prod_{S\supseteq S_1}u_S(\p),\prod_{S\supseteq S_2}u_S(\p)\right)=\prod_{S\supseteq S_1\cup S_2}u_S(\p)
\end{equation}
for all $S_1,S_2$. We have
$$
P_k(x) \leq \sum_{T\subseteq[k]}P_{k,T}(x),
$$
where $P_{k,T}(x)$ counts the number of $k$-tuples with $u_T(\p)=\max_{S}u_S(\p)$, so that it is enough to establish the bound    
\begin{equation}\label{4.3}
P_{k,T}(x) \ll x(\log x)^{2^k-k-1}	
\end{equation}
for each $T\subseteq[k]$. Fix $T$ and let $|T|=k-\nu$ for some $0\leq \nu\leq k-1$. By symmetry, we can assume that $T=T_0:=\{\nu+1,\ldots,k\}$. Since there are $2^k-1$ factors in (\ref{4.1}), we have 
$$
\prod_{S\neq T_0}u_S(\p) \leq x^{1-1/(2^k-1)}.
$$
Assume that $\nu>0$ for now. Let $V_0=1$ and take a prime $p_1\leq x$. Recall the functions $\tau_l$ defined in (\ref{2.10}). There are $\tau_{2^{k-1}}(p_1-1)$ factorizations $p_1-1=\prod_{S\ni 1}u_S$ with positive integer $u_S$. We fix any of them such that the `current' set $\{u_S: S\ni 1\}$ of $u_S$ satisfies (\ref{4.2}), and set 
$$
V_1=\prod_{S\ni 1,2} u_S.
$$ 
Next, choose a prime $p_2$ which is equal to $1$ modulo $V_1$ and fix any of $\tau_{2^{k-2}}((p_2-1)/V_1)$ factorizations $(p_2-1)/V_1=\prod_{\min S=2}u_S$ such that the set of numbers $\{u_S: \min S\leq 2\}$ satisfies (\ref{4.2}). We define 
$$
V_2=\prod_{\substack{S\ni 3\\\min S\leq 2}} u_S
$$
and continue this procedure. More precisely, for each $1\leq i\leq \nu-1$, once primes $p_1,\ldots,p_i$ and all $u_S$ with $\min S\leq i$ and (\ref{4.2}) are chosen, we take a prime $p_{i+1}\equiv 1\pmod {V_i}$ and fix any of $\tau_{2^{k-i}}((p_{i+1}-1)/V_i)$ factorizations $(p_{i+1}-1)/V_i=\prod_{\min S= i+1}u_S$ such that the set $\{u_S\}_{\min S\leq i+1}$ obeys (\ref{4.2}), and write
$$
V_{i+1}=\prod_{\substack{S\ni i+1\\\min S\leq i}} u_S.
$$
After $\nu$ steps, the procedure terminates. Define the number
\begin{equation}\label{4.4}
G=\prod_{S:\, \min S\leq \nu}u_S=\prod_{i=1}^{\nu}\frac{p_i-1}{V_{\nu-1}} 
\end{equation}
(the last equality holds because a number $u_S$ with $\min S=i\in\{1,...,\nu\}$ is a factor of \\$(p_i-1)/V_{i-1}$). We finally choose $u_S$ for each $S$ with $\min S>\nu$, $S\neq T_0$, such that 
\begin{equation}\label{4.5}
\prod_{\min S>\nu, \, S\neq T_0}u_S\leq x^{1-1/(2^k-1)}/G
\end{equation}
and all the numbers $u_S, S\neq T_0,$ satisfy (\ref{4.2}). For $j\geq \nu+1$, let
\begin{equation}\label{4.6}
a_j=\prod_{S\ni j, \, S\ne T_0}u_S. 
\end{equation}
Then $u_{T_0}\leq x/\prod_{S\neq T_0}u_S$ must be such that $a_ju_{T_0}+1$ is a prime (actually just $p_j$) for each $\nu+1 \leq j\leq k$. Since we assume that all primes $p_1,\ldots,p_k$ are distinct, we can think that the numbers $a_j$ are distinct for $\nu+1\leq j\leq k$. Also, by (\ref{4.5}), $\log\big(x/\prod_{S\neq T_0}u_S\big) \gg \log x$. By Brun's sieve (see, for instance, \cite[Theorem 2.3]{HR}) and (\ref{4.4}), there are at most
\begin{align*}
&\ll \frac{x}{\prod_{S\neq T_0}u_S\cdot (\log x)^{k-\nu}}\left(\frac{\Delta}{\varphi(\Delta)}\right)^{k-\nu} =\\ & \prod_{i=1}^{\nu}\frac{1}{(p_i-1)/V_{i-1}}\cdot\frac{x}{\prod_{\substack{S\neq T_0\\\min S>\nu}}u_S\cdot (\log x)^{k-\nu}}\left(\frac{\Delta}{\varphi(\Delta)}\right)^{k-\nu} 
\end{align*}
options for $u_{T_0}$, where
\begin{equation}\label{4.7}
\Delta=\prod_{j=\nu+1}^ka_j\prod_{\nu<i<j\leq k}|a_i-a_j|
\end{equation}
(note that $\Delta\neq0$ since the numbers $a_j$ are distinct). Therefore,
\begin{multline}\label{4.8}
P_{k,{T_0}}(x) \ll 
\sum_{p_1}\frac{\tau_{2^{k-1}}(p_1-1)}{p_1-1} \sum_{p_2}\frac{\tau_{2^{k-2}}((p_2-1)/V_1)}{(p_2-1)/V_1}\ldots\sum_{p_{\nu}}\frac{\tau_{2^{k-\nu}}((p_{\nu}-1)/V_{\nu-1})}{(p_{\nu}-1)/V_{\nu-1}} \times \\	
\times \sum_{u_S: \min S>\nu, \, S\neq T_0}\frac{x}{\prod_{\substack{S\neq T_0, \\ \min S>\nu }}u_S\cdot (\log x)^{k-\nu}}\left(\frac{\Delta}{\varphi(\Delta)}\right)^{k-\nu}, 
\end{multline}
where the outer summations are over $p_j\leq x$ with $p_j\equiv 1\pmod{V_{j-1}}$ for all $j=1,\ldots,\nu$. Recall that we assumed $\nu>0$; if $\nu=0$, the bound (\ref{4.8}) should be understood as excluding the summations over $p_1,\ldots,p_{\nu}$. 

Further, by the definitions (\ref{4.4}), (\ref{4.5}), (\ref{4.7}) of $G, a_{\nu+1},\ldots,a_k$ and $\Delta$,
$$
\left(\frac{\Delta}{\varphi(\Delta)}\right)^{k-\nu}\leq \left(\frac{G}{\varphi(G)}\right)^k\left(\frac{\Delta'}{\varphi(\Delta')}\right)^k\leq\prod_{i=1}^{\nu}\left(\frac{(p_i-1)/V_{i-1}}{\varphi((p_i-1)/V_{i-1})}\right)^k\left(\frac{\Delta'}{\varphi(\Delta')}\right)^k,
$$
where
\begin{equation}\label{4.9}
\Delta'=\prod_{\substack{S: \, \min S>\nu, \\ S\neq T_0}}u_S \cdot \prod_{\nu<i<j\leq k}\left(\prod_{\substack{S\ni i\\ S\notni j}}u_S-\prod_{\substack{S\ni j\\ S\notni i}}u_S\right) 
\end{equation}
(note that, for any $\nu<i<j\leq k$, the condition $S\ni i$, $S\notni j$ implies $S\neq T_0$, and that the numbers $\prod_{S\ni i,\, S\notni j}u_S$ and $\prod_{S\ni j, \, S\notni i}u_S$ are coprime due to (\ref{4.2}) applied with $S_1=\{i\}$ and $S_2=\{j\}$). We highlight that $\Delta'$ depends not only on $u_S$ with $\min S>\nu$, but also on $u_S$ with $\min S\leq \nu$ (which appeared in the factorizations of $p_1-1, (p_2-1)/V_1,\ldots, (p_{\nu}-1)/V_{\nu-1})$ we chosen). Let 
\begin{equation}\label{4.10}
f_i(n)=\frac{\tau_{2^{k-i}}(n)n^{k-1}}{\varphi(n)^k}, \quad i=1,\ldots, \nu.
\end{equation}
Then (\ref{4.8}) can be rewritten as
\begin{multline}\label{4.11}
P_{k,T_0}(x) \ll \frac{x}{(\log x)^{k-\nu}}\sum_{p_1}f_1(p_1-1) \sum_{p_2}f_2((p_2-1)/V_1)\ldots\sum_{p_{\nu}}f_{\nu}((p_{\nu}-1)/V_{\nu-1}) \times \\	\times \sum_{u_S:\, \min S>\nu, \, S\neq T_0}\frac{1}{\prod_{\substack{S\neq T_0, \\ \min S>\nu }}u_S}\left(\frac{\Delta'}{\varphi(\Delta')}\right)^k. 	
\end{multline}
Since all $u_S$ are at most $x$ and there are exactly $2^{k-\nu}-1$ subsets $S\subseteq [k]$ with $\min S>\nu$ and $S\neq T_0$, the innermost sum above would be 
$$
\ll (\log x)^{2^{k-\nu}-1},
$$
if we ignore the factor $(\Delta'/\varphi(\Delta'))^k$. Our next goal is to show that the same bound holds for the innermost sum itself. To do so, we need the following (actually straightforward) generalization of \cite[Lemma 4]{FP}.

\begin{lem}\label{lem4.1}
Uniformly for numbers $u_S\leq x$ with $\min S\leq \nu$,
\begin{equation}\label{4.12}
\sum_{\substack{S: \, \min S>\nu, \\ S\neq T_0}}\sum_{u_S\leq x}\frac{1}{\prod_{\substack{S\neq T_0, \\ \min S>\nu }}u_S}\prod_{r|\Delta'}\left(1+\frac1r\right)^k\ll (\log x)^{2^{k-\nu}-1}, 		
\end{equation}	
where the summation is over $u_S\leq x$ such that (\ref{4.2}) holds and $\Delta'\neq0$.
\end{lem}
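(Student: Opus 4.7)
The strategy generalizes that of Fan--Pomerance \cite[Lemma 4]{FP}. The key first step is to distribute the factor $\prod_{r\mid\Delta'}(1+1/r)^k$ across the individual factors of $\Delta'$ displayed in (\ref{4.9}). Since any prime divisor of $\Delta'$ divides at least one factor, and applying the bound factor-by-factor only over-counts primes shared between factors, one has
\[
\prod_{r\mid\Delta'}\left(1+\tfrac{1}{r}\right)^k \le \prod_{\substack{S:\,\min S>\nu\\ S\ne T_0}}F_k(u_S)\cdot\prod_{\nu<i<j\le k}F_k(|a_i-a_j|),
\]
where $F_k(n):=\prod_{r\mid n}(1+1/r)^k$ is multiplicative with $F_k(p^j)=(1+1/p)^k$ for all $j\ge 1$.

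For each factor $F_k(u_S)$, I would apply Lemma \ref{lem3.5} directly to $F_k$: one has $\sum_{p\le y}F_k(p)(\log p)/p = \log y + O(1)$, so the lemma with $\kappa=1$ yields $\sum_{u\le x}F_k(u)/u\ll\log x$, contributing one factor of $\log x$ per summation variable $u_S$.

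To control the difference factors $F_k(|a_i-a_j|)$, I would write $F_k=1\ast g_k$ via M\"obius inversion. A short computation shows that $g_k$ is multiplicative and supported on squarefrees, with $g_k(p)=(1+1/p)^k-1=O(1/p)$, so $\sum_{d\ge 1}g_k(d)/d$ converges absolutely. Interchanging summation in
\[
F_k(|a_i-a_j|)\,=\,\sum_{d_{ij}\mid a_i-a_j}g_k(d_{ij})
\]
converts each condition $d_{ij}\mid a_i-a_j$ into a congruence on the $u_S$'s. The sum over a single variable $u_S\le x$ in a fixed residue class modulo $d_{ij}$ is $\ll F_k(d_{ij})(\log x)/d_{ij}$, and $\sum_{d}g_k(d)F_k(d)/d^2<\infty$; hence the cumulative effect of all $\binom{k-\nu}{2}$ difference factors is absorbed into an $O(1)$ multiplicative constant.

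The main obstacle is the bookkeeping: several differences $a_i-a_j$ may involve the same variable $u_S$, so their congruences interact, and the frozen variables $u_S$ with $\min S\le\nu$ (which enter each $a_m$ multiplicatively) must be controlled uniformly. I would handle this by induction on the number of summation variables, peeling off one $u_{S_0}$ at a time. For each pair $(i,j)$, depending on whether both, exactly one, or neither of $i,j$ belongs to $S_0$, the condition $d_{ij}\mid a_i-a_j$ reduces either to a linear congruence in $u_{S_0}$ or, after extracting $\gcd(d_{ij},u_{S_0})$, to a constraint independent of $u_{S_0}$; the coprimality needed to invert the linear coefficients is supplied by (\ref{4.2}) applied to the peeled variable. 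Combining the per-variable bound $O(\log x)$ with the convergence of the $g_k$-sums then yields the claimed bound $(\log x)^{2^{k-\nu}-1}$.
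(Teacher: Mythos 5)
Your proposal shares the crucial insight with the paper: the coprimality guaranteed by (\ref{4.2}) for the two products in each difference factor of $\Delta'$ is what lets one convert ``$r\mid\Delta'$'' into a congruence that pins down a single variable $u_S$ modulo $r$. Beyond that, however, your route differs from the paper's and has two concrete gaps. First, the paper begins with two preliminary reductions that your plan lacks and that are what make the congruence counting manageable: (a) if any one summation variable $u_S$ satisfies $u_S\le x^{1/(\log_2 x)^k}$, the claim is trivial, because $\prod_{r\mid\Delta'}(1+1/r)^k\ll(\log_2 x)^k$ is absorbed by the savings in that one geometric-type sum; so one may assume all $u_S>x^{1/(\log_2 x)^k}$; and (b) $\prod_{r\mid\Delta',\, r>(\log x)^{1/2}}(1+1/r)^k\ll1$, so only primes $r\le(\log x)^{1/2}$ matter, which lets the paper expand $\prod(1+k/r)$ over $d\mid\gcd(\Delta',P)$ with $P=\prod_{r\le(\log x)^{1/2}}r$; crucially $P<x^{1/(\log_2 x)^k}$, so every admissible modulus $d$ is automatically smaller than every $u_S$. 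In your set-up the M\"obius-inverted moduli $d_{ij}$ can be as large as $|a_i-a_j|=x^{O(1)}$, at which point ``sum over $u_S\le x$ in a fixed residue class modulo $d_{ij}$ is $\ll F_k(d_{ij})(\log x)/d_{ij}$'' is no longer a usable bound and you would need a separate argument for the large-$d_{ij}$ tail.

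Second, the per-factor strategy after distributing $F_k$ across the factors of $\Delta'$ does not actually decouple the difference factors: the $\binom{k-\nu}{2}$ conditions $d_{ij}\mid a_i-a_j$ share variables, and when both $i,j$ belong to the peeled set $S_0$, the variable $u_{S_0}$ divides both $a_i$ and $a_j$, so the constraint becomes $d_{ij}\mid u_{S_0}\cdot(\cdot)$ and depends on $\gcd(d_{ij},u_{S_0})$ in a way that genuinely couples it to the other variables; the phrase ``after extracting $\gcd(d_{ij},u_{S_0})$, to a constraint independent of $u_{S_0}$'' is exactly the bookkeeping that needs to be done, and it is not done. Moreover, even granting the induction, Lemma \ref{lem3.5} (Wirsing) only controls full sums $\sum_{n\le x}f(n)/n$; your per-variable step requires an estimate for $\sum F_k(u_S)/u_S$ in residue classes of moduli $d_{ij}$ that are not uniformly small, which Lemma \ref{lem3.5} alone does not supply. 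The paper sidesteps all of this by keeping $\Delta'$ intact, using the two reductions above to ensure every modulus $d$ is both square-free and tiny, and then counting residues directly: $\Delta'\equiv0\pmod r$ determines one of the $w$ variables modulo $r$, giving at most $L^{\omega(d)}d^{w-1}$ admissible tuples modulo $d$, and the sum over each such class costs $\ll(\log x)^w d^{-w}$, so the total is $(\log x)^w\sum_{d\mid P}(Lk)^{\omega(d)}/d^2\ll(\log x)^w$. Your outline has the right spirit but would need both of the paper's reductions (or a substitute) and a complete execution of the induction to become a proof.
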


\begin{proof}
Since the product over the prime divisors of $\Delta'$ is at most $\ll(\log_2x)^k$ uniformly for all $u_S\leq x$, the claim holds if at least one of the variables $u_S$ with $\min S>\nu$ is restricted to integers up to $x^{1/(\log_2x)^k}$. Thus, we can assume that all $u_S$ go over the interval $(x^{1/(\log_2x)^k},x]$. Further, since each positive integer $n\leq x$ has at most $O(\log x)$ prime divisors, it is easy to see that
$$
\prod_{r|n: \, r>(\log x)^{1/2}}\left(1+\frac1r\right)^k \ll 1
$$	
uniformly for $n\leq x$. Then
$$
\prod_{r|n}\left(1+\frac1r\right)^k \ll \prod_{r|\gcd(n,P)}\left(1+\frac{k}{r}\right)=\sum_{d|\gcd(n,P)}\frac{k^{\o(d)}}{d}
$$
where 
$$
P=\prod_{r\leq (\log x)^{1/2}}r.
$$
Thus, to prove (\ref{4.12}), it suffices to show that 
\begin{equation}\label{4.13}
\sum_{\substack{x^{1/(\log_2x)^k}<u_S\leq x\\\min S>\nu, \, S\neq T_0}}\frac{1}{\prod_{\substack{S\neq T_0, \\ \min S>\nu }}u_S}\sum_{d|\gcd(P,\Delta')}\frac{k^{\o(d)}}{d}\ll (\log x)^{2^{k-\nu}-1}.
\end{equation}
Since each $d|P$ is at most $P=\exp\left((1+o(1))(\log x)^{1/2}\right)<x^{1/(\log_2x)^k}$ for large enough $x$, the left side here is 
\begin{equation}\label{4.14}
\ll \sum_{d|P}\frac{k^{\o(d)}}{d}\sum_{\substack{d<u_S\leq x\\\min S>\nu, \, S\neq T_0:\\ \Delta'\equiv 0\pmod d}}\frac{1}{\prod_{\substack{S\neq T_0, \\ \min S>\nu }}u_S}.
\end{equation}
Let $w=2^{k-\nu}-1$ for short. Fix $d|P$ and recall the definition (\ref{4.9}) of $\Delta'$. For a prime divisor $r$ of $d$, we have $\Delta'\equiv 0\pmod p$ if either some of $u_S\equiv0\pmod p$ (here $\min S>\nu, S\neq T_0$), or $\prod_{S \ni i, S\notni j}u_S \equiv \prod_{S \ni j, S\notni i}u_S \pmod p$ for some $\nu<i<j\leq k$. In the latter case, it is impossible that both products are equal to zero modulo $p$, since they are coprime by (\ref{4.2}). Therefore, in any of the mentioned cases we have one of the $w$ variables $u_S$ being determined modulo $p$ in terms of the remaining ones. Since $d$ is square-free, the number of $w$-tuples of variables $u_S$ modulo $d$ with $\Delta'\equiv 0\pmod d$ is at most $L^{\o(d)}d^{w-1}$ provided that the constant $L$ is large enough depending on $k$. For a fixed $w$-tuple of residues modulo $d$ we have here, the sum of $\prod_{S\neq T_0, \min S>\nu }u_S^{-1}$ in this class is $\ll (\log x)^{w}d^{-w}$ uniformly over the $w$-tuples modulo $d$ under consideration. It follows that the right side of (\ref{4.14}) is 
$$
\ll (\log x)^w\sum_{d|P}\frac{(Lk)^{\o(d)}}{d^2}  \leq (\log x)^w\prod_{r}\left(1+\frac{Lk}{r^2}\right) \ll (\log x)^w,
$$
and (\ref{4.13}) follows. This completes the proof.
\end{proof}

The estimate (\ref{4.11}), together with Lemma \ref{lem4.1}, implies
\begin{equation}\label{4.15}
P_{k,T_0}\ll x(\log x)^{2^{k-\nu}-k+\nu-1}\sum_{p_1}f_1(p_1-1) \sum_{p_2}f_2((p_2-1)/V_1)\ldots\sum_{p_{\nu}}f_{\nu}((p_{\nu}-1)/V_{\nu-1}), 
\end{equation}
and in particular the desired bound (\ref{4.3}) follows in the case $\nu=0$. If $\nu\geq1$, it remains to estimate each of the $\nu$ sums above. Let $\M$ be the collection of nonnegative-valued multiplicative functions $f$ satisfying
the conditions

(i) there is a constant $A_1>0$ such that $f(p^i) \leq A_1^i$ for all prime powers $p^i$;

(ii) for every $\e>0$, there is a constant $A_2(\e)>0$ such that $f(n) \leq A_2(\e)n^{\e}$ for all $n\geq1$.

\medskip 

We need the following result, which is Theorem 1.1 of \cite{Pol}.

\begin{theorem}[Brun’s upper bound sieve for multiplicative functions]\label{th4.2}
Let $f \in \M$ and $s$ be a nonnegative integer. For each prime $p \leq x$, let $\E_p$ be a union of $v(p)$ nonzero residue classes modulo $p$, where we suppose that each $v(p)\leq s$. Let
$$
\S=\bigcap_{p\leq x}\E^c_p,
$$ 
that is, $\S$ is the set of all positive integers not belonging to any $\E_p$. If $x$ is sufficiently large, then
$$
\sum_{\substack{n\leq x\\n\in \S}} f(n) \ll \frac{x}{\log x}\exp\left(\sum_{p\leq x}\frac{f(p)-v(p)}{p}\right)
$$
The implied constant here, as well as the threshold for ``sufficiently large'', depends only on $s$, the constant $A_1$ in (i) above, and the function $A_2(\e)$ in (ii).	
\end{theorem}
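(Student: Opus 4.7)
The plan is to reduce Theorem \ref{th4.2} to the classical fundamental lemma of Brun's sieve via a Möbius decomposition of $f$. Write $f = \mathbf{1} * g$ with $g = \mu * f$, so that $g$ is multiplicative with $g(p) = f(p) - 1$ and $g(p^j) = f(p^j) - f(p^{j-1})$ for $j \geq 2$; the hypotheses (i) and (ii) on $f$ transfer to bounds $|g(p^j)| \leq 2A_1^j$ and $|g(n)| \ll_{\e} n^{\e}$. Exchanging the order of summation,
$$
\sum_{\substack{n \leq x \\ n \in \S}} f(n) = \sum_{d \leq x} g(d)\, N(x/d;\, d), \qquad N(y;d) := \#\{m \leq y : dm \in \S\}.
$$

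For each $d$, the condition $dm \in \S$ becomes a sieve problem for $m$: primes $p \mid d$ force $dm \equiv 0 \pmod p$, which already avoids $\E_p$ since all classes in $\E_p$ are nonzero, so such primes impose no condition; while primes $p \nmid d$ cut $v(p) \leq s$ residue classes from $m$. Because the sifting dimension $s$ is uniformly bounded, the classical fundamental lemma (cf.\ \cite{HR}) gives, uniformly for $d \leq x^{1 - \eta}$ with any fixed $\eta > 0$,
$$
N(x/d;d) \ll \frac{x}{d} \prod_{\substack{p \leq x \\ p \nmid d}}\left(1 - \frac{v(p)}{p}\right).
$$
The tail $d > D := x^{1/2}$ is disposed of by Rankin's trick using $|g(d)| \ll_{\e} d^{\e}$, contributing $\ll x D^{-\sigma + o(1)}$ for small $\sigma > 0$. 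Inserting the Brun bound for $d \leq D$, extending the divisor sum to infinity with a matching Rankin-type error, and using multiplicativity of $g$ to convert the answer into an Euler product yields
$$
\sum_{\substack{n \leq x \\ n \in \S}} f(n) \ll x \prod_{p \leq x}\left[\left(1 - \frac{v(p)}{p}\right) + \sum_{j \geq 1}\frac{g(p^j)}{p^j}\right].
$$
The local factor simplifies to $1 + (f(p) - v(p) - 1)/p + O(p^{-2})$, so taking logarithms and invoking Mertens' estimate $\sum_{p \leq x} 1/p = \log\log x + O(1)$ produces exactly $(\log x)^{-1} \exp\left(\sum_{p \leq x}(f(p) - v(p))/p\right)$ up to constants, which is the claimed bound.

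The main obstacle is the sign issue for $g$: since $g = \mu * f$ need not be nonnegative, one cannot simply replace $g$ by $|g|$ without destroying the cancellation in $g(p) = f(p) - 1$ that underwrites the correct exponent on $f(p)$ in the final answer. The standard workaround is to pair the Brun upper bound on $N(x/d;d)$ with a matching lower bound of the same quality, both available in the bounded-dimension regime $v(p) \leq s$, so that each $g(d)$-contribution is bracketed to the same leading order and negative terms are absorbed safely. A secondary, easier technicality is verifying the uniformity of Brun's fundamental lemma in $d$; this is automatic here because the bounded sifting dimension allows the lemma to hold at sifting level $(x/d)^{c}$ for any fixed $c > 0$, comfortably within the range $d \leq x^{1/2}$ chosen above.
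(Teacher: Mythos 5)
The paper does not prove Theorem~\ref{th4.2} at all: it is quoted verbatim as Theorem~1.1 of Pollack~\cite{Pol}, and the text immediately preceding it says exactly this. So there is no ``paper's own proof'' to compare against; you have attempted to reconstruct the proof of an external black box. That is a legitimate exercise, but the construction you give has two real gaps.

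First, the sign problem for $g=\mu*f$ is not a ``secondary technicality'' that can be waved away by pairing upper and lower sieve bounds. If you bracket $N(x/d;d)$ between $N^{-}$ and $N^{+}$, you get
$\sum_{d}g(d)N(x/d;d)\le \sum_{g(d)>0}g(d)N^{+}(x/d;d)+\sum_{g(d)<0}g(d)N^{-}(x/d;d)$, which is the main term plus an error of size $\sum_{d}|g(d)|\,(N^{+}-N^{-})$. The error term $N^{+}-N^{-}$ from Brun's fundamental lemma is a small multiple of the main term only when the sieve level is a small power of $x/d$; controlling $\sum_d|g(d)|\cdot(\text{error})$ uniformly over $d$ up to $x^{1/2}$ requires a careful, quantitative version of the fundamental lemma, and you have not written it down. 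Moreover, the desired bound involves $\exp\bigl(\sum_p (f(p)-v(p))/p\bigr)$, whereas any argument that at some point replaces $g(d)$ by $|g(d)|$ produces $\exp\bigl(\sum_p(|f(p)-1|-v(p))/p\bigr)\cdot\text{const}$, which is strictly worse whenever $f(p)<1$ for a positive proportion of $p$. So the bracketing has to be done without ever passing to $|g|$ on the main term, and that is exactly the nontrivial part.

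Second, the Rankin-trick treatment of the tail $d>D$ is wrong as stated. From hypotheses (i)--(ii) you only get $\sum_{d\le y}|g(d)|/d \ll (\log y)^{O(1)}$ — a quantity that does not tend to $0$ — so bounding $N(x/d;d)$ trivially by $x/d$ gives $\sum_{D<d\le x}|g(d)|\,N(x/d;d)\ll x\sum_{D<d\le x}|g(d)|/d$, which is of size $x(\log x)^{O(1)}$, not $xD^{-\sigma+o(1)}$. Since the target bound can be as small as $x(\log x)^{-s-1}$ (take $f$ vanishing on primes and $v(p)=s$), the tail as you estimate it swamps the main term. One must either keep the sieve savings all the way into the tail (which runs into the degeneration of the fundamental lemma as $x/d\to 1$), or avoid the Dirichlet-convolution decomposition altogether — which is what Pollack's argument (in the style of Shiu/Nair/Halberstam--Richert, factoring $n$ into its smooth and rough parts and sieving the rough part) does. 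In short, the plan reaches the correct Euler-product heuristic, but the two obstacles you flag in passing are precisely where the real work lives, and neither is resolved.
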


We write 
\begin{align*} 
\sum_{\substack{p_{\nu}\leq x \\ p_{\nu}\equiv 1\pmod{V_{\nu-1}}}}\tau_{2^{k-\nu}}\left(\frac{p_{\nu}-1}{V_{\nu-1}}\right)\left(\frac{(p_{\nu}-1)/V_{\nu-1}}{\varphi((p_{\nu}-1)/V_{\nu-1})}\right)^k\leq\sum_{\substack{n\leq x/V_{\nu-1}: \\ nV_{\nu-1}+1 \mbox{\tiny{ prime }} }}\tau_{2^{k-\nu}}(n)\left(\frac{n}{\varphi(n)}\right)^k. 
\end{align*}
Recall that each $V_i$ is at most $x^{1-1/(2^k-1)}$ and let $\e>0$ be small enough. If $nV_{\nu-1}+1$ is prime for some $(x/V_{\nu-1})^{1/2}<n\leq x/V_{\nu-1}$, then it does not have prime divisors less than $x^{1/2}V_{\nu-1}^{1/2}$. Define the set $\S$ via $\E_p=\varnothing$ for $p|V_{\nu-1}$ or $x^{1/2}V_{\nu-1}^{1/2}<p\leq x/V_{\nu-1}$, and $\E_p=\{-V_{\nu-1}^{-1} \pmod p\}$ otherwise. Applying Theorem \ref{th4.2} with $s=1$, we see that the right side above is
$$
(x/{V_{\nu-1}})^{1/2+\e}+\sum_{\substack{n\leq x/V_{\nu-1} \\ n\in \S }}\tau_{2^{k-\nu}}(n)\left(\frac{n}{\varphi(n)}\right)^k \ll \frac{x(\log x)^{2^{k-\nu}-2}}{\varphi(V_{\nu-1})}.
$$
Thus, due to the definition (\ref{4.10}) of the functions $f_i$, 
$$
\sum_{\substack{p_{\nu}\leq x \\ p_{\nu}\equiv 1\pmod{V_{\nu-1}}}}f_{\nu}\big((p_{\nu}-1)/V_{\nu-1}\big) \ll (\log x)^{2^{k-\nu}-1}\frac{V_{\nu-1}}{\varphi(V_{\nu-1})}.
$$
Now $V_{\nu-1}$ is a factor of $(p_{\nu-1}-1)/V_{\nu-2}$ and we can argue similarly to get
$$
\sum_{\substack{p_{\nu-1}\leq x \\ p_{\nu-1}\equiv 1\pmod{V_{\nu-2}}}}f_{\nu-1}\left(\frac{p_{\nu-1}-1}{V_{\nu-2}}\right)\frac{(p_{\nu-1}-1)/V_{\nu-2}}{\varphi(p_{\nu-1}-1)/V_{\nu-2}}  \ll (\log x)^{2^{k-\nu+1}-1}\frac{V_{\nu-2}}{\varphi(V_{\nu-2})}.
$$
After $\nu-2$ more such steps, we arrive at the estimate
\begin{align*} 
& \sum_{p_1}f_1(p_1-1) \sum_{p_2}f_2((p_2-1)/V_1)\ldots\sum_{p_{\nu}}f_{\nu}((p_{\nu}-1)/V_{\nu-1})\ll \\ & (\log x)^{2^{k-1}+2^{k-2}+...+2^{k-\nu}-\nu}=(\log x)^{2^k-2^{k-\nu}-\nu}.
\end{align*}
From this and (\ref{4.15}) we conclude the desired estimate (\ref{4.3}), and the upper bound in Theorem \ref{th1} follows.


\section{The distribution of $\o^*$} \label{sec5}

For $1\leq y\leq x$, let 
$$
N(x,y)=\#\{n\leq x: \o^*(n)\geq y\}.
$$ 
In \cite{FP}, the bounds
$$
\frac{x}{y^{c_0\log_2 y}} \ll N(x,y) \ll \frac{x\log y}{y}
$$	
were proved (the lower bound is essentially a consequence of \cite[Proposition 10]{APM}), where $c_0>0$ is an absolute constant. The goal of this section is to provide more precise estimates. First of all, Markov's inequality and Theorem \ref{th1} easily imply the following.  	

\begin{prop} \label{prop5.1}
For any fixed positive integer $k\geq2$,
\begin{equation}\label{5.1} 
N(x,y)\ll \min\bigg\{\frac{x\log y}{y}, \, \min_{2\leq j\leq k}\frac{x(\log x)^{2^j-j-1}}{y^j}\bigg\}.
\end{equation}
\end{prop}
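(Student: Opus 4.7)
The proof should be short, since the statement is essentially a straightforward consequence of two already-established facts: the Fan--Pomerance upper bound $N(x,y) \ll x(\log y)/y$ quoted in the paragraph above the proposition, and Theorem \ref{th1}.

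My plan is to reduce (\ref{5.1}) to two separate estimates and then take the minimum. First, I would cite the Fan--Pomerance upper bound $N(x,y) \ll x(\log y)/y$ directly; that takes care of the first term inside the $\min$. Next, for each fixed $j$ with $2 \leq j \leq k$, I would apply Markov's inequality in the form
$$
N(x,y) = \#\{n \leq x : \o^*(n)^j \geq y^j\} \leq \frac{1}{y^j}\sum_{n\leq x}\o^*(n)^j,
$$
and then invoke the upper bound half of Theorem \ref{th1} to conclude that
$$
N(x,y) \ll \frac{x(\log x)^{2^j-j-1}}{y^j}.
$$
Taking the minimum of these bounds over $j \in \{2,\ldots,k\}$ together with the Fan--Pomerance bound gives exactly (\ref{5.1}).

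There is no real obstacle here; the only thing to verify is that Theorem \ref{th1} is already available at the moment one quotes it, which is fine since Section \ref{sec5} comes after the proof of Theorem \ref{th1} in both Sections \ref{sec3} and \ref{sec4}. One small remark worth including: the two families of bounds are genuinely complementary, with the Fan--Pomerance bound dominating for small $y$ (roughly $y \lesssim \log x$) and the Markov-type bounds dominating for larger $y$, which justifies writing the result as a $\min$ rather than picking a single estimate.
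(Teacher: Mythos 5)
Your proposal is correct and matches the paper's approach exactly: the paper dismisses the proof with ``Markov's inequality and Theorem \ref{th1} easily imply the following,'' which is precisely the Markov-plus-moment-bound argument you give for each $j$, combined with the quoted Fan--Pomerance bound for the first term. Nothing more is needed.
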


Clearly, the first bound is better for $y\ll \frac{\log x}{\log_2 x}$, and, for $j\geq2$, the bound $N(x,y) \ll x(\log x)^{2^j-j-1}y^{-j}$ is the best one in the range  $(\log x)^{2^{j-1}-1} \ll y\ll (\log x)^{2^j-1}$. In fact, simple `concentration' arguments show that these estimates are sharp up to the factor of $\log_2x$ at least for some values of $y$.

\begin{prop} \label{prop5.2}
i) There are values $y$ with $\exp(c(\log_2x)^{1/2}) \ll y\ll \frac{\log x}{\log_2x}$, where $c>0$ is a small absolute constant, such that 
$$
N(x,y) \gg \frac{x}{y}.
$$

ii) There are values $y$ with $\frac{\log x}{\log_2x}\ll y\ll (\log x)^3$ with
$$
N(x,y) \gg \frac{x\log x}{y^2\log_2x}.
$$

iii) For any fixed $j\geq3$, there are values $y$ with $(\log x)^{2^{j-1}-1}\ll y\ll (\log x)^{2^j-1}$ such that 
$$
N(x,y) \gg \frac{x(\log x)^{2^j-j-1}}{y^j\log_2x}.
$$
\end{prop}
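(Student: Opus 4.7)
The plan is to prove all three parts by the same moment-pigeonhole argument, using the identity
\[
\sum_{n\leq x}\o^*(n)^k = k\int_0^\infty z^{k-1}N(x,z)\,dz
\]
together with the lower bound (\ref{1.2}) when $k=1$ (for part (i)) and Theorem \ref{th1} when $k\geq 2$ (for parts (ii) with $k=2$ and (iii) with $k=j$). For each part I would assume, for contradiction, that $N(x,z)$ is uniformly below the claimed bound on the interval in question, and then estimate $\int z^{k-1}N(x,z)\,dz$ by splitting the domain into three sub-intervals: below the interval, inside it, and above it. With the cutoff constants chosen appropriately, the three contributions will sum to strictly less than the true $k$-th moment, yielding a contradiction.

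The key case is part (iii); take $j\geq 3$, set $A=(\log x)^{2^{j-1}-1}$ and $B=(\log x)^{2^j-1}$, and assume
\[
N(x,z)\leq \eta\,\frac{x(\log x)^{2^j-j-1}}{z^j\log_2 x}\qquad\text{for all }z\in[cA,CB],
\]
with $\eta,c>0$ small and $C$ large, to be chosen. For $z\leq cA$, use Proposition \ref{prop5.1} with index $j-1$, namely $N(x,z)\ll x(\log x)^{2^{j-1}-j}/z^{j-1}$, which contributes $\ll c\,x(\log x)^{2^j-j-1}$ to the moment integral (via $(2^{j-1}-j)+(2^{j-1}-1)=2^j-j-1$). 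For $z\geq CB$, use the index-$(j+1)$ bound $N(x,z)\ll x(\log x)^{2^{j+1}-j-2}/z^{j+1}$, which contributes $\ll C^{-1}x(\log x)^{2^j-j-1}$ by the identity $2^{j+1}-j-2-(2^j-1)=2^j-j-1$. For $z\in[cA,CB]$, the hypothesis and $\log(B/A)\asymp \log_2 x$ give $\ll \eta\,x(\log x)^{2^j-j-1}$. Choosing $c,\eta$ small and $C$ large makes the total strictly less than $\sum_{n\leq x}\o^*(n)^j\asymp x(\log x)^{2^j-j-1}$, a contradiction, so at least one $y$ in the interval must satisfy the stated lower bound.

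Parts (i) and (ii) proceed identically, only with different end-point bounds. In part (ii), the index-$1$ moment $\sum_n\o^*(n)\asymp x\log_2 x$ controls the small-$z$ contribution, and the index-$3$ bound from Proposition \ref{prop5.1} controls the large-$z$ contribution. In part (i), the small-$z$ contribution is bounded trivially by $N(x,z)\leq x$, giving $o(x\log_2 x)$ since $\exp(c(\log_2 x)^{1/2})=o(\log_2 x)$, and the large-$z$ contribution is handled by the index-$2$ estimate $N(x,z)\ll x\log x/z^2$. The main obstacle, and the source of the $\log_2 x$ loss in parts (ii)–(iii), is that each of the three upper-bound contributions has exactly the same polylog order $x(\log x)^{2^j-j-1}$ as the moment itself; only the multiplicative constants $c$, $\eta$, $1/C$ separate them from the target. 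The cutoffs $(\log x)^{2^{j-1}-1}$ and $(\log x)^{2^j-1}$ are precisely those at which the exponents of the neighbouring moment bounds balance.
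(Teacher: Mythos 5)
Your proposal is correct and takes essentially the same approach as the paper: the paper uses a dyadic decomposition $\sum_{k\geq0}e^{jk}\#\{n\leq x:e^k<\omega^*(n)\leq e^{k+1}\}\asymp\sum_n\omega^*(n)^j$ together with the endpoint bounds from Proposition~\ref{prop5.1} and a pigeonhole step, while you use the equivalent integral identity $\sum_n\omega^*(n)^k=k\int_0^\infty z^{k-1}N(x,z)\,dz$ and a proof by contradiction over the same three ranges. Both arguments bound the low and high ranges via Proposition~\ref{prop5.1} at indices $j-1$ and $j+1$ (or $k=1$ and $k=2$ for part~(i)) and extract the conclusion from the middle range, so they are the same proof in discrete versus continuous clothing.
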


\smallskip 

\begin{proof}
We begin with the first claim. First of all, 
\begin{equation}\label{5.2} 
\sum_{k\geq 0} e^k\,\#\{n\leq x: e^k< \o^*(n)\leq e^{k+1}\}\geq e^{-1}\sum_{n\leq x}\o^*(n) \geq 0.3x\log_2 x
\end{equation}
by (\ref{1.2}). For $k\leq c(\log_2x)^{1/2}$, from the first estimate in (\ref{5.1}) we get 	\begin{align}\label{5.3} 
& \sum_{k\leq c(\log_2x)^{1/2}} e^k\,\#\{n\leq x: e^k< \o^*(n)\leq e^{k+1}\} \ll \\ & x\sum_{k\leq c(\log_2x)^{1/2}}k \ll c^2x\log_2 x < 0.1x\log_2 x \nonumber 
\end{align}
provided that $c>0$ is small enough. Now let $u=\log_2x-\log_3x+C$ with $C>0$ large enough. Using the second estimate in (\ref{5.1}) with $j=2$, we get
\begin{equation}\label{5.4} 
\sum_{k>u} e^k\,\#\{n\leq x: e^k<\o^*(n)\leq e^{k+1}\} \ll x\log x\sum_{k>u}e^{-k} \leq 0.1x\log_2x.
\end{equation}
Combining (\ref{5.2})-(\ref{5.4}), we see that
$$
\sum_{c(\log_2x)^{1/2} < k\leq \log_2x}e^k\,\#\{n\leq x: e^k< \o^*(n)\leq e^{k+1}\} \asymp x\log_2x.
$$
By the pigeonhole principle, there is $k$ with $c(\log_2x)^{1/2} < k\leq u$ such that
$$
e^k\,\#\{n\leq x: e^k< \o^*(n)\leq e^{k+1}\} \gg x.
$$
and the first claim follows by taking $y=e^k$. 		
	
To establish the second claim, we similarly start with	
$$
\sum_{k\geq0}e^{2k}\#\{n\leq x: e^k< \o^*(n)\leq e^{k+1}\}\gg \sum_{n\leq x}\o^*(n)^2  \gg x\log x.
$$
Let $v=\log_2x-\log_3x-C$ with $C>0$ large enough. By the first estimate in (\ref{5.1}),
$$
\sum_{k\leq v}e^{2k}\#\{n\leq x: e^k< \o^*(n)\leq e^{k+1}\} \ll x\sum_{k\leq v}ke^{k} \ll e^{-C}x\log x. 
$$
and, by (\ref{5.1}) applied with $j=3$, 
\begin{align*} 
& \sum_{k>3\log_2x+C}e^{2k}\#\{n\leq x: e^k< \o^*(n)\leq e^{k+1}\} \ll \\
& x(\log x)^4\sum_{k>3\log_2x+C}e^{-k} \ll e^{-C}x\log x. 
\end{align*}
Thus,
$$
\sum_{v\leq k\leq 3\log_2x+C}e^{2k}\#\{n\leq x: e^k< \o^*(n)\leq e^{k+1}\} \asymp x\log x.
$$
By the pigeonhole principle, there is $k$ in the above range such that 
$$
e^{2k}\#\{n\leq x: e^k< \o^*(n)\leq e^{k+1}\} \gg \frac{x\log x}{\log_2x}.
$$
Taking $y=e^k$ this completes the proof. 

The last claim can be proved in the same manner. Fix $j\geq3$. By Theorem \ref{th1},
$$ 
\sum_{k\geq0}e^{jk}\#\{n\leq x: e^k< \o^*(n)\leq e^{k+1}\}\gg \sum_{n\leq x}\o^*(n)^j  \gg x(\log x)^{2^j-j-1}.
$$
Let $c_j=2^j-1$ for short and $C>0$ be large enough. By (\ref{5.1}) applied with $j-1$,
\begin{align*}
& \sum_{k\leq c_{j-1}\log_2x-C}e^{jk}\#\{n\leq x: e^k< \o^*(n)\leq e^{k+1}\} \ll \\& x(\log x)^{2^{j-1}-j}\sum_{k\leq c_{j-1}\log_2x-C}e^{k} \ll 
 e^{-C}x(\log x)^{2^j-j-1}. 
\end{align*}
and, by (\ref{5.1}) applied with $j+1$, 
\begin{align*} 
& \sum_{k>c_j\log_2x+C}e^{jk}\#\{n\leq x: e^k< \o^*(n)\leq e^{k+1}\} \ll \\
& x(\log x)^{2^{j+1}-j-2}\sum_{k>c_j\log_2x+C}e^{-k} \ll e^{-C}x(\log x)^{2^j-j-1}. 
\end{align*}
Thus,
$$
\sum_{c_{j-1}\log_2 x-C\leq k\leq c_j\log_2x+C}e^{jk}\#\{n\leq x: e^k< \o^*(n)\leq e^{k+1}\} \asymp x(\log x)^{2^j-j-1}.
$$
By the pigeonhole principle, there is $k$ in the above range such that 
$$
e^{jk}\#\{n\leq x: e^k< \o^*(n)\leq e^{k+1}\} \gg \frac{x(\log x)^{2^j-j-1}}{\log_2x}.
$$
This completes the proof. 
\end{proof}

\end{document}